\def\d{{\,\rm d}}
\def\dist{\operatorname{dist}}
\def\bbC{\mathbb{C}}
\def\bbR{\mathbb{R}}
\def\bbS{\mathbb{S}}
\def\strain{\boldsymbol{\epsilon}}
\newtheorem{proposition}{Proposition}[section]
\newtheorem{theorem}[proposition]{Theorem}
\newtheorem{lemma}[proposition]{Lemma}
\theoremstyle{definition}
\newtheorem{remark}[proposition]{Remark}
\numberwithin{equation}{section}
\newcommand{\weakto}{\rightharpoonup}
\newcommand{\Om}{\Omega}
\newcommand{\R}{\mathbb{R}}
\newcommand{\tr}{\operatorname{tr}}
\title[]
{Linearization in magnetoelasticity}
\author[S. Almi]
{Stefano Almi}
\address[S. Almi ]{University of Naples Federico II, Department of Mathematics and Applications ``Renato Caccioppoli'', via Cintia, Monte S. Angelo, 80126 Naples, Italy.}
\email{stefano.almi@unina.it}
\author[M. Kru\v{z}\'{i}k]
{Martin Kru\v{z}\'{i}k}
\address[M. Kru\v{z}\'{i}k]{Czech Academy of Sciences, Institute of Information Theory and Automation,
Pod vod\'arenskou v\v{e}\v z\'\i\ 4, 182 00, Prague 8, Czechia \& Faculty of Civil Engineering, Czech Technical University, Th\'{a}kurova 7, 166 29, Prague 6, Czechia}
\email{kruzik@utia.cas.cz}
\author[A. Molchanova] {Anastasia Molchanova} 
\address[A. Molchanova]{University of Vienna, Faculty of Mathematics, Oskar-Morgenstern-Platz 1, 1090 Vienna, Austria \& TU Wien, Institute of Analysis and Scientific Computing, Wiedner Hauptstra\ss e 8--10, 1040 Wien}
\email{anastasia.molchanova@tuwien.ac.at}
\keywords{Linearization, magnetoelasticity}
\begin{document}

\begin{abstract}
    Starting from a model of nonlinear magnetoelasticity where magnetization is defined in the Eulerian configuration while elastic deformation is in the Lagrangian one,  we rigorously derive a linearized model that coincides with the standard one that already appeared in the literature, see, e.g.,  \cite{DeSimJam2002} and where the zero-stress strain is quadratic in the magnetization.
    The relation of the nonlinear and linear model is stated in terms of the $\Gamma$-convergence and convergence of minimizers.
\end{abstract}


\maketitle

\section{Introduction}

Magnetoelasticity is a characteristic observed in certain solids wherein there is a significant interdependence between their mechanical and magnetic properties. This coupling manifests as reversible mechanical deformations that can be induced by applying an external magnetic field. This phenomenon holds considerable practical value, especially in the design of sensors, actuators, and various innovative functional-material devices,
  see e.g.\  \cite{GaoZengPengSchuai2002}. 

The origin of magnetoelasticity can be traced to the intricate interplay between the crystallographic structure of the material, where different crystals exhibit distinct easy axes of magnetization, and magnetic   domains~\cite{HubSha1998}.  In the absence of external magnetic fields, these magnetic domains align to minimize long-range dipolar effects, leading to a generally minimal or even negligible magnetization of the medium. Upon the application of an external magnetic field, the magnetic domains undergo a reorientation towards it. This reorientation leads to the emergence of a macroscopic deformation since magnetizations are linked to specific stress-free reference strains. As the magnetic field's intensity increases, an increasing number of magnetic domains align themselves so that their principal axes of anisotropy align with the magnetic field in each region, eventually reaching saturation. For a deeper exploration of the foundational aspects of magnetoelasticity, refer to \cite{Bro1966,DorOgd2014,HubSha1998}.

Magnetoelasticity is a dynamic area of mathematical research which reflects modeling in linear and nonlinear regimes \cite{DeSimJam2002,WanSte2012}, analysis in the static and quasistatic setting  \cite{BarHenMor2017,BreDavKru2023,Bre2021} while also being
coupled with thermal and other phenomena, e.g.~ \cite{RouTom2013}. 
Moreover, dimension-reduction problems covering lower-dimensional structures in various scaling regimes were recently as well analyzed; for instance, see \cite{Bre2021} or \cite{DaKrPiSt21}. 

The aim of our contribution is to establish a relationship between nonlinear and linear models of magnetoelasticity in the static case. To our best knowledge, it is the first situation where this program is undertaken in the combined Eulerian and Lagrangian description.

Let $\Omega$ be a (connected)  bounded domain in $\bbR^d$ $(d\geq 2)$ with Lipschitz boundary~$\partial \Om$, representing a reference configuration occupied by an elastic body. We fix $\Gamma$ open subset of $\partial\Om$ such that $\mathcal{H}^{d-1} (\Gamma)>0$, where $\mathcal{H}^{d-1}$ denotes the $(d-1)$-dimensional Hausdorff measure. We consider the magnetoelastic energy of the following form \cite{Bro1966,JamKin1993,BreDavKru2023,RybLus2005}: 
\begin{equation}
\label{e:energy-lin}
    \mathcal{G}(y,m):= \int_{\Omega} W(\nabla y, m \circ y) \d x + \frac{1}{2}\int_{y(\Omega)} | \nabla m|^{2} \d z + \frac{\mu_0}{2} \int_{\bbR^d} |\nabla v_m|^2 \d z
\end{equation} 
for a deformation $y \in W^{1,p} (\Omega; \bbR^{d})$ 
with $p> d$, 
$\det \nabla y > 0$ a.e., $y$ is injective a.e.~in $\Omega$, and $y=y_{0}$ on $\Gamma$ where $y_{0} \in  W^{1,p} (\Omega; \bbR^{d})$ is a given boundary datum,
and a magnetization $m \in W^{1,2}(y(\Omega)\setminus y(\partial \Omega); \bbR^{d})$. 
In \eqref{e:energy-lin}, the first term represents the elastic energy, the second term is the exchange energy, and
the last term stands for magnetostatic energy;~$\mu_{0}$ is the permittivity of void and~$v_m$ is the magnetostatic potential generated by~$m$. 
In particular,~$v_m$ is a solution to the Maxwell equation
$\nabla \cdot(-\mu_0 \nabla v_m + \chi_{y(\Omega)} m) =0$ in $\bbR^d$ with~$\chi_{y(\Omega)}$ the characteristic function of~$y(\Omega)$.  Furthermore, we can assume that the specimen temperature is fixed and therefore the Heisenberg normalized constraint 
\begin{align}\label{heisenberg}
|m \circ y|\, \det \nabla y = 1 \text{ a.e.~in } \Omega
\end{align}
holds for all deformations
\cite{Bro1966,JamKin1993}.

Let us point out some peculiarities caused by the mixed Eulerian--Lagrangian formulation. 
First, to correctly define the composition $m \circ y$, we must identify $y$ with its continuous representative, due to Sobolev embeddings theorems \cite[Section 1.4.5]{Maz2011}.
Second, as it is conventional in magnetoelasticity \cite{RybLus2005,BreDavKru2023,Bresciani-Kruzik}, we define $m \in W^{1,2}(y(\Omega) \setminus y(\partial \Omega); \bbR^{d})$, since $y(\Omega) \setminus y(\partial \Omega)$ is an open set 
and it differs form $y(\Omega)$ just on a set of measure zero, i.e.~$|y(\Omega)| = |y(\Omega) \setminus y(\partial \Omega)|$,
see for example \cite{FonGan1995} or \cite[Lemma 2.1]{BreDavKru2023}.

  As in \cite{EthMie2016, Yavari},  we suppose the deformation gradient $F$ can be multiplicatively  decomposed into magnetic and elastic parts $F = F_{m}F_{el}$
with the magnetic part $F_{m}$ being volume-preserving, i.e., $\det F_m=1$.
In our setting, we assume 
$F_{m}:=\exp(-E(M))$, i.e., $F^{-1}_{m}= \exp (E(M))$, where $E(M)$ is defined as 
\[
    E(M): =-M\otimes M+\frac{1}{d} I.
\]
Here, $M\in\R^d$ is a placeholder for the Lagrangian magnetization whose length $|M|$ satisfies $M\cdot M=|M|^2=1$, and $I$ is the identity matrix in $\bbR^{d\times d}$. 
Note that $\tr E(M)=0$, so that, indeed, $\det F_m^{-1}=1$.  In view of the Heisenberg constraint \eqref{heisenberg}, 
\begin{align}\label{Heisenberg}
    M(x)=m(y(x))\det\nabla y(x)
\end{align}  
satisfies $|M(x)|=1$ for  $x\in\Omega$. 
Consequently,  
using the change-of-variables formula for Sobolev mappings \cite{Haj1993}  (considering that 
$y$ is a.e.~injective and satisfies the Lusin $(N)$-condition)
for $\omega\subset\Omega$
it holds 
\[
    \int_{\omega}M(x)\, \d x=\int_{y(\omega)}m(z)\, \d z. 
\]
 Therefore,  $M$ and $m$ transform as volume densities in the reference (Lagrangian) and the deformed (Eulerian) configurations, respectively. 

We define  $e\colon\R^{d\times d}\times\R^d\to\R^{d\times d}$  as
\[
    e(F,m)= -(\det F)^2 m\otimes m+\frac{1}{d} I.
\]
Whenever $m$ and $M$ are two magnetic fields related by \eqref{Heisenberg},  then $e(\nabla y,m\circ y)=E(M)$ in $\Om$.

In the case $F = I$, we simplify the notation by setting $e(m):=e(I,m) =- m \otimes m + \frac{1}{d} I$. 
Next, we assume that the elastic energy density is of form
\[
    W(F,m) = \Phi\left(\exp(e(F,m))F\right)=\Phi(F_{el}). 
\] 
We recall that the matrix exponential $\exp (A)= \sum_{k=0}^\infty A^k/k!$ is defined for every $A\in\R^{d\times d}$, setting also 
$A^0=I$. Here, $\Phi$ is a nonnegative function that is minimized precisely at the set of proper rotations. This form of the energy density is inspired by energy expressions in the following papers on nematic elastomers \cite{Agostiniani-DeSimone_2011,Agostiniani-DeSimone_2017, Agostiniani-DeSimone_2020,DeSimone-Teresi_2009}. 

In the \textit{small-strain regime}, we set 
$y_\varepsilon = id + \varepsilon u$ and assume that the influence of the magnetization on the elastic deformation of the specimen vanishes along $\varepsilon\to 0$ in the sense that $F_{m,\varepsilon}=\exp(-  \varepsilon E(M))$ and $\lim_{\varepsilon\to 0} F_{m,\varepsilon}=I$. 
Then, we consider the rescaled energy 
\begin{equation}
\label{e:energy-lin-2}
\begin{aligned}
    \mathcal{G}_\varepsilon(u, m) &:= \frac{1}{\varepsilon^{2}} \int_{\Omega} W(Id + \varepsilon \nabla u (x), m (x + \varepsilon u(x)))\d x + \frac{1}{2}\int_{y_\varepsilon(\Omega)} | \nabla m (z)|^{2}\d z + \frac{\mu_0}{2} \int_{\bbR^d} |\nabla v_m (z)|^2 \d z\\
   &=  \frac{1}{\varepsilon^{2}} \int_{\Omega}\Phi(\exp(\varepsilon E(M))(I+\varepsilon\nabla u(x)))\, \d x +\frac{1}{2}\int_{y_\varepsilon(\Omega)} | \nabla m (z)|^{2}\d z + \frac{\mu_0}{2} \int_{\bbR^d} |\nabla v_m (z)|^2 \d z.
    \end{aligned}
\end{equation}

We formally get the expression for the elastic strain 
\[
    F_{el,\varepsilon}=F^{-1}_{m,\varepsilon}\nabla y_\varepsilon= \operatorname{exp}(\varepsilon E(M))\cdot(I + \varepsilon \nabla u) \approx I + \varepsilon (\nabla u + e(m)).
\] 
Moreover, a formal Taylor expansion shows that the limit energy is, as in \cite{DeSimJam2002}, of the form:
\begin{equation}\label{e:energy-lin-0}
\mathcal{G}_{0}(u, m) :=  \frac{1}{{2}} \int_{\Omega} \bbC (\strain(u) + e (m)) : (\strain(u) + e (m)) \d x + \frac{1}{2}\int_{\Omega} | \nabla m|^{2}\d x + \frac{\mu_0}{2} \int_{\bbR^d} |\nabla v_m|^2 \d z,
\end{equation}
where 
$e(u) := \frac{1}{2} (\nabla u^T + \nabla u)$ is the symmetric part of the displacement gradient and
$\bbC:=D^2 \Phi(I)$ is the elasticity tensor. 
Note that \eqref{e:energy-lin-0} suggests a decomposition of the total strain $\strain(u)$ into the magnetic part $-e(m)=m\otimes m-I/d$ and the elastic part $\strain(u)+e(m)$.  
This decomposition has already been used, e.g., in  \cite{DeSimJam2002, KrStZa2015,mudivarthi2010 }. 
The aim of this contribution is to derive \eqref{e:energy-lin-0} rigorously by means of $\Gamma$-convergence.

We use the following assumptions for $p>d$: 
\begin{enumerate}
    \item[(a)] \label{hyp:a} 
        $\Phi$ is frame-indifferent;
    \item[(b)] \label{hyp:b} 
        $\Phi$ belongs to $C^{2}$ in some neighbourhood of $SO(d)$;
    \item[(c)] \label{hyp:c} 
        $\Phi(F) = 0$ if $F\in SO(d)$;
    \item[(d)] \label{hyp:d} 
        $\Phi(F) \geq g_p (\dist(F,SO(d)))$ for some $p$, where
        \begin{equation}\label{def:gp}
            g_p(t):=
            \begin{cases}
                \frac{t^2}{2}, & \text{if } 0\leq t\leq 1,\\
                \frac{t^p}{p} + \frac{1}{2} - \frac{1}{p}, & \text{if } t>1.
            \end{cases}
        \end{equation}
    \item[(e)] \label{hyp:e} 
        there exist $a>1$ and~$C>0$ such that
        \begin{displaymath}
                 \Phi (F) \geq C \bigg(\frac{1}{(\det F)^{a}} - 1 \bigg) \qquad \text{for every $F \in {\bbR^{d\times d}}$.} 
        \end{displaymath}
\end{enumerate}

Let us notice that $g_{p}$ satisfies the following properties:
$g_{p}$ is convex, 
\begin{align}
\label{e:g-properties-1}
    g_p (t) \leq \frac{1}{p} \min \{t^p, t^2\}\,, \qquad 
g_{p}(s+t) \leq C (g_{p}(s) + t^2)
\end{align} 
for some constant $C>0$ depending on $p$, and for any $C_{1} >0$ there exists a constant $C_{2}>0$, depending only on~$p$ and on~$C_{1}$, such that 
\begin{equation}
    \label{e:g-properties-2}
    g_{p}(C_1 t) \leq C_2 g_{p}(t)\,.
    \end{equation}

Frame-indifference \hyperref[hyp:a]{(a)}  implies that $W(F,m)=W(F,-m)=W(QF,Qm)$ for  all $F\in\R^{d\times d}$ all $Q\in\textrm{SO}(d)$, and all $m\in\R^d$. 
Moreover, for every $  F   \in\R^{d\times d}$ 
\begin{equation}
    D^2\Phi(I)[  F,F  ] = D^2\Phi(I)[  F_{sym},F_{sym}  ].
\end{equation}
Together with \hyperref[hyp:d]{(d)}, it implies that 
$D^2\Phi(I)[  F,F  ] = 0$ if $  F=-F^\top  $ 
and 
\begin{equation}
    D^2\Phi(I)[  F_{sym},F_{sym}  ] \geq |  F_{sym}  |^2 .
\end{equation}
Above   $F_{sym}$   stands for the symmetric part of   $F$.  
While \hyperref[hyp:b]{(b)} together with $\Phi(I) = D \Phi(I) = 0$ leads to 
\begin{equation}\label{neq:quadrat}
    \Phi(I+F) \geq \frac{1}{2} D^2 \Phi(I)[F,F] - \eta(|F|)|F|^2,
\end{equation}
where $\eta$ is an increasing nonnegative function such that $\eta(t) \leq C_{R} t $ for $|t| \leq R$ and $R >0$.

In what follows, the work of a mechanical load is modeled by a continuous linear functional $\mathscr{L}\colon L^{2}(\Omega;\bbR^d)\to \bbR$ and the magnetic loading  is described by a  functional $\mathscr{M}\colon L^{2}(\Omega;\bbR^d)\times L^2(\bbR^d;\bbR^d)\to \bbR$
\begin{equation*}
    \mathscr{L}(y):= \int_{\Omega} f\cdot y \d x,
    \qquad \text{and} \qquad
    \mathscr{M}(y,m):= \int_{y(\Omega)} h\cdot m \d z
\end{equation*}
where $f\in L^{2}(\Omega;\bbR^d)$ is a body force and 
$h\in L^2(\bbR^d;\bbR^d)$  represents an external magnetic field, while~$\mathscr{M}$ is called the Zeeman energy.

If $y \in W^{1, p}(\Omega;\bbR^d)$ represents the deformation of the elastic body, the stable equilibria of the elastic body are obtained by minimizing the functional
\[
    \int_{\Omega} W(\nabla y,m\circ y) \d x-\mathscr{L}(y)-\mathscr{M}(y,m) ,
\]
under the prescribed boundary conditions. 
The area of interest of this paper is
the case where the load has the form~$\varepsilon \mathscr{L}$, and 
to study the behavior of the solution as~$\varepsilon$ tends to zero. We write
\[
     y_{\varepsilon} =id  +\varepsilon u
\]
and
assume the Dirichlet boundary condition of the form
\[
     y_{\varepsilon} = id+\varepsilon w \quad \mathcal{H}^{d-1} \text {-a.e. on } \Gamma , 
\]
with a given function $w \in W^{2, \infty}(\Omega; \bbR^d)$. 
Denoting a set of functions satisfying boundary conditions as
\[
    \begin{aligned}
        W^{1,p}_w(\Omega; \bbR^d)& := & \left\{u\in W^{1,p}(\Omega; \bbR^d): u=w \quad \mathcal{H}^{d-1} \text {-a.e. on } \Gamma \right\}\!, \\
        W^{2,\infty}_w(\Omega; \bbR^d)& := & \left\{u\in W^{2,\infty}(\Omega; \bbR^d): u=w \quad \mathcal{H}^{d-1} \text {-a.e. on } \Gamma \right\}\!,
        \\
    \end{aligned}
\]
the corresponding minimum problem for $(u,m)$ becomes
\begin{align*}
    \min \biggl\{\int_{\Omega} W(I+\varepsilon \nabla u (x), m(x + \varepsilon u(x))) \d x-\varepsilon \mathscr{L}(\varepsilon u) & - \mathscr{M}(id+\varepsilon u,m) : \\
    & (u,m) \in W^{1, p}_w (\Omega; \bbR^d) \times W^{1,2}( y_{\varepsilon}(\Omega) \setminus y_{\varepsilon} (\partial \Om)  ; \bbR^d) \biggr\}\!.
\end{align*}
The class of admissible deformations is given by
\begin{equation}
    \mathcal{Y}:=\left\{y\in W^{1,p}(\Omega;\bbR^d): \det \nabla y >0 \text{ a.e. in } \Omega, \: y \text{ is a.e.~injective in } \Omega \right\}\!.
\end{equation}
For $w\in W^{2,\infty} (\Om; \R^{d})$ and $\varepsilon >0$, define the class of admissible states as
\begin{align*}
    \mathcal{A}^w_\varepsilon :=\bigl\{(u,m)& \in W^{1,2}_w(\Omega;\bbR^d) \times L^2(\bbR^d;\bbR^d): y_\varepsilon: = id+\varepsilon u \in \mathcal{Y},  \\
    & m \in W^{1,2}( y_\varepsilon(\Omega) \setminus y_{\varepsilon} (\partial \Om)  ; \bbR^d), \: |m \circ y_\varepsilon|\, \det \nabla y_\varepsilon = 1 \text{ a.e.~in } \Omega\bigr\},
\end{align*}
and
\begin{equation*}
    \mathcal{A}^w_0 := W^{1,2}_w(\Omega;\bbR^d) \times  W^{1,2}(\Omega; \bbS^{d-1}).
\end{equation*}

The main result of this work is the following $\Gamma$-convergence statement.

\begin{theorem}\label{thm:gamma-convergence}
    Assume that $W\colon \bbR^{d\times d} \times \bbR^d \rightarrow[0, \infty]$ satisfies conditions \hyperref[hyp:a]{(a)}--\hyperref[hyp:e]{(e)} for some $p > d $, and let $w \in W^{2, \infty}(\Omega; \bbR^d)$. For every $\varepsilon_j \to 0$ we have that
    \[
        \mathcal{G}_{\varepsilon_j} \stackrel{\Gamma}{\longrightarrow} \mathcal{G}_0, \quad \text { as } j \to \infty
    \]
    in the weak topology of $W^{1, 2}(\Omega; \bbR^d) \times L^{2}(\bbR^d;\bbR^d)$.
\end{theorem}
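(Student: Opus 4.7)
The plan is to follow the Dal Maso--Negri--Percivale paradigm for linearization of finite elasticity, adapted to the present mixed Eulerian--Lagrangian magnetoelastic setting; the three ingredients are compactness for bounded-energy sequences, the liminf inequality, and the construction of a recovery sequence. For compactness, given $(u_\varepsilon, m_\varepsilon)\in\mathcal{A}_\varepsilon^w$ with $\sup_\varepsilon\mathcal{G}_\varepsilon(u_\varepsilon,m_\varepsilon)<\infty$, set $M_\varepsilon:=(m_\varepsilon\circ y_\varepsilon)\det\nabla y_\varepsilon$ and $F_\varepsilon:=\exp(\varepsilon E(M_\varepsilon))(I+\varepsilon\nabla u_\varepsilon)$. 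Hypothesis (d) gives $\int_\Om g_p(\dist(F_\varepsilon,SO(d)))\,dx\leq C\varepsilon^2$, and the quantitative Friesecke--James--M\"uller rigidity estimate, combined with the boundary condition on $\Gamma$, produces constant rotations $R_\varepsilon\to I$ with $\|R_\varepsilon^\top F_\varepsilon-I\|_{L^2}\leq C\varepsilon$. Since $|M_\varepsilon|=1$ forces $|E(M_\varepsilon)|\leq C$ pointwise, this translates into $\|\nabla u_\varepsilon\|_{L^2}\leq C$, whence $u_\varepsilon\weakto u$ in $W^{1,2}_w(\Om;\R^d)$ and $y_\varepsilon\to id$ strongly in $L^2$. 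Pulling the exchange-energy bound back to $\Om$ via the change-of-variables formula (Lusin $(N)$) and using $|M_\varepsilon|=1$ yields $M_\varepsilon$ bounded in $W^{1,2}(\Om;\R^d)$; after subsequence extraction $M_\varepsilon\weakto m$ with $m\in W^{1,2}(\Om;\bbS^{d-1})$, and $m_\varepsilon \weakto m$ in $L^2(\R^d)$ (after extension by zero) follows from $y_\varepsilon\to id$ together with the change of variables.

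For the liminf inequality, expand $F_\varepsilon = I + \varepsilon(\nabla u_\varepsilon + E(M_\varepsilon)) + O(\varepsilon^2)$ and apply \eqref{neq:quadrat} on the good set $\{\varepsilon|\nabla u_\varepsilon + E(M_\varepsilon)|\leq \delta\}$; its complement has vanishing measure and is controlled by the superlinear branch of $g_p$ in \eqref{e:g-properties-1}. Because $E$ is symmetric and $M_\varepsilon\to m$ strongly in $L^q$ for every $q<\infty$, the symmetric part of $\nabla u_\varepsilon + E(M_\varepsilon)$ converges weakly in $L^2$ to $\strain(u)+e(m)$, and weak $L^2$-lower semicontinuity of $A\mapsto D^2\Phi(I)[A_{sym},A_{sym}]$ delivers the elastic contribution. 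The exchange term passes to the lower limit via a further change of variables using $\nabla y_\varepsilon\to I$, while $\chi_{y_\varepsilon(\Om)}m_\varepsilon\weakto \chi_\Om m$ in $L^2(\R^d)$ implies $\nabla v_{m_\varepsilon}\weakto \nabla v_m$, and hence the magnetostatic bound by lower semicontinuity of the $L^2$-norm.

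For the limsup, take smooth $(u,m)\in\mathcal{A}_0^w$, set $y_\varepsilon := id + \varepsilon u$ (a diffeomorphism for small $\varepsilon$) and, on $y_\varepsilon(\Om)$, define
\[
    m_\varepsilon(z) := \frac{m(y_\varepsilon^{-1}(z))}{\det\nabla y_\varepsilon(y_\varepsilon^{-1}(z))}.
\]
Then $M_\varepsilon\equiv m$, the Heisenberg constraint is satisfied identically, and $(u,m_\varepsilon)\in\mathcal{A}_\varepsilon^w$. A pointwise Taylor expansion of $\Phi$ around $I$, dominated uniformly by \eqref{e:g-properties-1}, gives $\frac{1}{\varepsilon^2}\Phi(\exp(\varepsilon E(m))(I+\varepsilon\nabla u))\to \frac{1}{2}\bbC(\strain(u)+e(m)):(\strain(u)+e(m))$ in $L^1(\Om)$, while the exchange and magnetostatic terms pass to the limit via a second change of variables together with the smoothness of $u$ and $m$. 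Extension to arbitrary $(u,m)\in\mathcal{A}_0^w$ follows by a diagonal argument after regularization that preserves the $\bbS^{d-1}$-valuedness of $m$ and the Dirichlet datum $w$ on $\Gamma$.

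The key obstacle is the coupling between Eulerian and Lagrangian descriptions encoded by the nonlinear relation $M=(m\circ y)\det\nabla y$: in the liminf step it forces a quantitative analysis of how $E(M_\varepsilon)$ relates to $e(m)$ even though $\nabla u_\varepsilon$ converges only weakly, and in the limsup step it dictates the precise form of the recovery magnetization, which must respect $|m_\varepsilon\circ y_\varepsilon|\det\nabla y_\varepsilon = 1$ for every $\varepsilon$. Handling this constraint on the $\varepsilon$-dependent Eulerian domain $y_\varepsilon(\Om)$, and transferring regularity estimates between $m_\varepsilon$ and $M_\varepsilon$ quantitatively in spite of the fact that the energy only provides $W^{1,2}$-control on $u_\varepsilon$, is the principal technical hurdle; the remaining elastic-side analysis proceeds by a careful adaptation of the classical Dal Maso--Negri--Percivale machinery.
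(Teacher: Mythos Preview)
Your overall architecture matches the paper's, but two steps do not go through as written.

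The first is the compactness claim that pulling back the exchange energy yields $M_\varepsilon=(m_\varepsilon\circ y_\varepsilon)\det\nabla y_\varepsilon$ bounded in $W^{1,2}(\Om;\R^d)$. Expanding $\nabla M_\varepsilon$ produces a term $(m_\varepsilon\circ y_\varepsilon)\otimes\nabla(\det\nabla y_\varepsilon)$, which involves \emph{second} derivatives of $u_\varepsilon$; the energy controls $u_\varepsilon$ only in $W^{1,2}$ (and $\varepsilon u_\varepsilon$ in $W^{1,p}$), so no such bound is available. Even the remaining term $(\nabla m_\varepsilon\circ y_\varepsilon)\,\nabla y_\varepsilon\,\det\nabla y_\varepsilon$ is not controlled by $\int_{y_\varepsilon(\Om)}|\nabla m_\varepsilon|^2$ alone, since after the change of variables a surplus factor $\det\nabla y_\varepsilon$ survives and $\nabla y_\varepsilon\notin L^\infty$. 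The paper never claims a Sobolev bound on $M_\varepsilon$: it works with $m_\varepsilon$ directly on the Eulerian sets $\Om_\delta\subset y_\varepsilon(\Om)$, uses hypothesis~(e) --- which you do not invoke --- to make $|m_\varepsilon\circ y_\varepsilon|^2\det\nabla y_\varepsilon=1/\det\nabla y_\varepsilon$ equi-integrable, and obtains $m_\varepsilon\circ y_\varepsilon\to m$ in $L^1$ through a Lusin-type truncation argument. The strong $L^r$ convergence of $M_\varepsilon$ that feeds into your liminf step then follows \emph{a~posteriori} from $|M_\varepsilon|=1$ and a.e.\ convergence, not from any $W^{1,2}$ estimate.

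The second gap is in the recovery sequence: you construct it for smooth $(u,m)$ and then appeal to ``regularization that preserves the $\bbS^{d-1}$-valuedness of $m$''. In the physically relevant case $d=3$ this fails: by Bethuel's density theorem, $C^\infty(\overline\Om;\bbS^{2})$ is \emph{not} dense in $W^{1,2}(\Om;\bbS^{2})$ because $\pi_2(\bbS^{2})\neq 0$. The paper avoids this obstruction by taking $u_0\in W^{2,\infty}_w$ but leaving $m_0\in W^{1,2}(\Om;\bbS^{d-1})$ untouched; since $y_\varepsilon=id+\varepsilon u_0$ is then bi-Lipschitz with $\det\nabla y_\varepsilon$ Lipschitz, the formula $m_\varepsilon=(m_0\circ y_\varepsilon^{-1})/\det\nabla y_\varepsilon$ already lies in $W^{1,2}(y_\varepsilon(\Om))$ without any smoothing of $m_0$, and only $u_0$ requires the final diagonal approximation in $W^{1,2}_w$.
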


\begin{remark}
    In view of the compactness properties stated in Proposition~\ref{prop:compactness} and in order to state the $\Gamma$-convergence in $W^{1, 2}(\Omega; \bbR^d) \times L^{2}(\bbR^d;\bbR^d)$, In Theorem~\ref{thm:gamma-convergence} we identify~$m_{\varepsilon}$ with $\chi_{y_{\varepsilon} (\Om)} m_{\varepsilon}$  for $(u_{\varepsilon}, m_{\varepsilon}) \in \mathcal{A}^{w}_{\varepsilon}$ and $y_{\varepsilon} = id + \varepsilon u_{\varepsilon}$. Similarly, in the limit as $\varepsilon \to 0$ we identify the magnetization~$m$ with~$\chi_{\Om} m$.
\end{remark}

Theorem~\ref{thm:gamma-convergence} is completed by the following theorem, stating the convergence of minima and minimizers. Here, we consider loading terms as well. In particular, for every $\varepsilon>0$ and every $(u, m) \in \mathcal{A}^{w}_{\varepsilon}$ we set
\[
    \mathcal{F}_\varepsilon (u,m):=\mathcal{G}_\varepsilon (u,m) - \mathscr{L}(u) - \mathscr{M}(id+\varepsilon u,m).
\]

\begin{theorem}\label{thm:linearization}
    Under the hypotheses of Theorem~\ref{thm:gamma-convergence}, for every $\varepsilon>0$ define $s_{\varepsilon}:=\inf\{\mathcal{F}_\varepsilon (u,m) : (u,m) \in \mathcal{A}^w_\varepsilon\}$ and let 
    $(u_{\varepsilon}, m_{\varepsilon}) \in \mathcal{A}^w_\varepsilon $ be a sequence such that
    \begin{equation}
        \label{e:thm:lin}
        \mathcal{F}_\varepsilon (u_{\varepsilon},m_{\varepsilon})
        =s_{\varepsilon}+o(1) \quad \text{ as } \varepsilon \to 0.
    \end{equation}
    Then, $u_{\varepsilon} \weakto u_0$ weakly in $W^{1, 2}(\Omega;\bbR^d)$, $\chi_{y_{\varepsilon} (\Om)} m_{\varepsilon}\to \chi_{\Omega} m_0$ in $L^{2}(\bbR^d;\bbR^d)$, and $\chi_{y_\varepsilon(\Omega)}\nabla m_{\varepsilon}\weakto \chi_\Omega \nabla m_0$ weakly in $L^{2}(\bbR^d;\bbR^{d\times d})$, where $(u_0,m_0)\in \mathcal{A}_0^w$ is the solution to the minimum problem
    \[
        s_0:=\min\bigl\{\mathcal{F}_0(u,m): (u,m) \in \mathcal{A}^{w}_0\bigr\},
    \]
    where $\mathcal{F}_0(u,m):=\mathcal{G}_0(u,m)-\mathscr{L}(u) - \mathscr{M}(id,m)$ and $\mathcal{G}_0$ is defined by \eqref{e:energy-lin-0}. 
    Moreover, $s_{\varepsilon} \rightarrow s_0$.
\end{theorem}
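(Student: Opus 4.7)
The plan is to deduce Theorem~\ref{thm:linearization} from Theorem~\ref{thm:gamma-convergence} by the classical route ``$\Gamma$-convergence + equi-coercivity + continuous perturbation $\Rightarrow$ convergence of minima and minimizers''. There are three ingredients to assemble: an a priori bound on an almost-minimizing sequence $(u_\varepsilon,m_\varepsilon)$ triggering Proposition~\ref{prop:compactness}; continuity of the loading terms $\mathscr{L}$ and $\mathscr{M}$ along the convergences supplied by that proposition; and a matching $\limsup$ inequality obtained from the recovery sequences of Theorem~\ref{thm:gamma-convergence}.

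First, for the a priori bound I would fix any convenient competitor $(\hat u,\hat m)\in\mathcal{A}_0^w$ (for instance $\hat u=w$ together with a unit-norm constant magnetization) and let $(\hat u_\varepsilon,\hat m_\varepsilon)\in\mathcal{A}_\varepsilon^w$ be a recovery sequence from Theorem~\ref{thm:gamma-convergence}. The almost-minimality \eqref{e:thm:lin} yields $\mathcal{F}_{\varepsilon}(u_\varepsilon,m_\varepsilon)\leq \mathcal{F}_{\varepsilon}(\hat u_\varepsilon,\hat m_\varepsilon)+o(1)$, and the Heisenberg constraint forces $\|\chi_{y_\varepsilon(\Om)} m_\varepsilon\|_{L^1(\bbR^d)}=|\Om|$. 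Combined with an $\varepsilon$-uniform bound on $\|\chi_{y_\varepsilon(\Om)} m_\varepsilon\|_{L^2}$ (absorbed against the exchange term after using the $L^2$-continuity of $\mathscr{L}$ and Young's inequality on the Zeeman term), this gives $\mathcal{G}_\varepsilon(u_\varepsilon,m_\varepsilon)\leq C$. Proposition~\ref{prop:compactness} then provides, up to a subsequence, a limit $(u_0,m_0)\in\mathcal{A}_0^w$ with $u_\varepsilon\weakto u_0$ in $W^{1,2}(\Om;\bbR^d)$, $\chi_{y_\varepsilon(\Om)}m_\varepsilon\to\chi_\Om m_0$ strongly in $L^2(\bbR^d;\bbR^d)$, and $\chi_{y_\varepsilon(\Om)}\nabla m_\varepsilon\weakto \chi_\Om\nabla m_0$ weakly in $L^2(\bbR^d;\bbR^{d\times d})$.

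Second, I pass to the limit in the loadings using these convergences. The mechanical part is immediate since $\mathscr{L}$ is linear and continuous on $L^2$. For the Zeeman part, I rewrite
\[
\mathscr{M}(id+\varepsilon u_\varepsilon,m_\varepsilon)=\int_{\bbR^d}h\cdot(\chi_{y_\varepsilon(\Om)}m_\varepsilon)\,\d z,
\]
so the strong $L^2$ convergence of $\chi_{y_\varepsilon(\Om)}m_\varepsilon$ gives $\mathscr{M}(id+\varepsilon u_\varepsilon,m_\varepsilon)\to \int_\Om h\cdot m_0\,\d z=\mathscr{M}(id,m_0)$. Joining this with the $\Gamma$-$\liminf$ of Theorem~\ref{thm:gamma-convergence} produces
\[
\mathcal{F}_0(u_0,m_0)\leq \liminf_{\varepsilon\to 0}\mathcal{F}_\varepsilon(u_\varepsilon,m_\varepsilon)=\liminf_{\varepsilon\to 0}s_\varepsilon.
\]
For the reverse inequality, given any $(\tilde u,\tilde m)\in\mathcal{A}_0^w$, I invoke the recovery sequence $(\tilde u_\varepsilon,\tilde m_\varepsilon)\in\mathcal{A}_\varepsilon^w$ from Theorem~\ref{thm:gamma-convergence}. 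The same continuity argument applied to $(\tilde u_\varepsilon,\tilde m_\varepsilon)$ yields $\mathcal{F}_\varepsilon(\tilde u_\varepsilon,\tilde m_\varepsilon)\to\mathcal{F}_0(\tilde u,\tilde m)$, hence $\limsup_\varepsilon s_\varepsilon\leq \mathcal{F}_0(\tilde u,\tilde m)$. Minimizing over $(\tilde u,\tilde m)$ and chaining with the $\liminf$ estimate proves that $(u_0,m_0)$ minimizes $\mathcal{F}_0$ and that $s_\varepsilon\to s_0$. Convergence of the full (not just extracted) sequence $(u_\varepsilon,m_\varepsilon)$ in each of the three senses then follows by a standard Urysohn argument, applied separately in each topology.

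The main obstacle is the handling of the Zeeman term inherent to the mixed Lagrangian--Eulerian formulation: the domain $y_\varepsilon(\Om)$ varies with $\varepsilon$, so continuity of $\mathscr{M}$ along an equi-bounded-energy sequence is nontrivial and crucially hinges on the \emph{strong} (not merely weak) $L^2$-convergence of $\chi_{y_\varepsilon(\Om)}m_\varepsilon$ guaranteed by Proposition~\ref{prop:compactness}. Aside from this point, one only needs bookkeeping of the Heisenberg constraint in the a priori estimate and the fact that a continuous perturbation preserves $\Gamma$-convergence together with convergence of (almost-)minimizers.
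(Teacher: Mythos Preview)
Your overall route—bound $\mathcal{G}_\varepsilon$ uniformly, apply Proposition~\ref{prop:compactness}, then chain the $\Gamma$-$\liminf$ with a recovery-sequence $\limsup$ and continuity of the loadings—is exactly the paper's. Your treatment of $\mathscr{L}$ (weak $L^2$) and of $\mathscr{M}$ via the identity $\mathscr{M}(id+\varepsilon u_\varepsilon,m_\varepsilon)=\int_{\bbR^d}h\cdot(\chi_{y_\varepsilon(\Om)}m_\varepsilon)\,\d z$ together with the strong $L^2$ convergence of $\chi_{y_\varepsilon(\Om)}m_\varepsilon$ is correct and is the point where the strong convergence in Proposition~\ref{prop:compactness} is indeed essential.

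The genuine gap is in your a priori estimate: you claim the Zeeman term can be handled by ``absorbing $\|\chi_{y_\varepsilon(\Om)} m_\varepsilon\|_{L^2}$ against the exchange term''. The exchange energy controls only $\|\nabla m_\varepsilon\|_{L^2(y_\varepsilon(\Om))}$; upgrading this to an $L^2$ bound on $m_\varepsilon$ itself would require a Poincar\'e- or Gagliardo--Nirenberg-type inequality on $y_\varepsilon(\Om)$ with an $\varepsilon$-independent constant, but nothing is known about the geometry of $y_\varepsilon(\Om)$ \emph{before} $\mathcal{G}_\varepsilon$ is bounded, so the argument is circular. The $L^1$ identity $\|\chi_{y_\varepsilon(\Om)} m_\varepsilon\|_{L^1}=|\Om|$ from the Heisenberg constraint does not close this loop either. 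The paper (Proposition~\ref{prop:f_to_g}) instead uses assumption~\hyperref[hyp:e]{(e)} through the \emph{elastic} term: the Heisenberg constraint plus change of variables give the exact identity $\|m_\varepsilon\|_{L^2(y_\varepsilon(\Om))}^2=\int_\Om(\det\nabla y_\varepsilon)^{-1}\,\d x$, while~(e) bounds $\int_\Om(\det\nabla y_\varepsilon)^{-a}\,\d x\leq C(\varepsilon^2\mathcal{G}_\varepsilon+1)$, whence by H\"older $\|m_\varepsilon\|_{L^2(y_\varepsilon(\Om))}^2\leq C(\mathcal{G}_\varepsilon+1)$. Together with Lemma~\ref{prop:compactness_u} for $\|u_\varepsilon\|_{L^2}$ this yields $\mathcal{G}_\varepsilon\leq K+C\sqrt{\mathcal{G}_\varepsilon}+C$, hence $\mathcal{G}_\varepsilon\leq C(K)$. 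Once you replace your exchange-term absorption by this step, the remainder of your argument goes through. (One minor caveat: your Urysohn step for full-sequence convergence presumes uniqueness of the minimizer of $\mathcal{F}_0$; since $m$ is $\bbS^{d-1}$-valued this is not automatic, and the paper does not address it either.)
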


\begin{theorem}\label{thm:convergence_of_minimizers}
    Under the hypotheses of Theorems~\ref{thm:gamma-convergence} and~\ref{thm:linearization}, let $(u_{\varepsilon}, m_{\varepsilon}) \in \mathcal{A}^{w}_{\varepsilon}$ satisfy~\eqref{e:thm:lin}. Then, up to a subsequence, 
    $(u_{\varepsilon},  \chi_{y_{\varepsilon} (\Om)} m_\varepsilon)  \to (u_0,  \chi_{\Om}  m_0)$ strongly in 
    $W^{1, 2}(\Omega; \bbR^d) \times L^{2}(\bbR^d;\bbR^d)$.
\end{theorem}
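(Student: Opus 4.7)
The only additional content beyond Theorem~\ref{thm:linearization} is upgrading the weak convergence $u_{\varepsilon} \weakto u_0$ in $W^{1,2}(\Omega;\bbR^d)$ to strong convergence, since the strong $L^2$ convergence of the (extended) magnetizations is already part of Theorem~\ref{thm:linearization}. My plan is to reduce this to strong convergence of the symmetrized gradients $\strain(u_\varepsilon) \to \strain(u_0)$ in $L^2(\Omega;\bbR^{d\times d}_{sym})$. Since $u_\varepsilon - u_0$ vanishes $\mathcal{H}^{d-1}$-a.e.~on $\Gamma$, Korn's inequality on $W^{1,2}_{0,\Gamma}$ gives $\|\nabla(u_\varepsilon - u_0)\|_{L^2} \leq C\|\strain(u_\varepsilon - u_0)\|_{L^2}$, and combined with Rellich compactness this is enough.

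To get strong convergence of the symmetric gradients, I would exploit convergence of minima. Since $(u_\varepsilon, m_\varepsilon)$ is quasi-minimizing and $s_\varepsilon \to s_0$ by Theorem~\ref{thm:linearization}, we obtain $\mathcal{F}_\varepsilon(u_\varepsilon, m_\varepsilon) \to \mathcal{F}_0(u_0, m_0)$. The loadings pass to the limit: $\mathscr{L}(u_\varepsilon) \to \mathscr{L}(u_0)$ by Rellich, and $\mathscr{M}(id+\varepsilon u_\varepsilon, m_\varepsilon) \to \mathscr{M}(id, m_0)$ by the strong $L^2$ convergence of the extended magnetizations. Hence $\mathcal{G}_\varepsilon(u_\varepsilon, m_\varepsilon) \to \mathcal{G}_0(u_0, m_0)$. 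Because each of the three summands of $\mathcal{G}_\varepsilon$ (elastic, exchange, magnetostatic) is separately bounded from below by its $\Gamma$-liminf counterpart built in the proof of Theorem~\ref{thm:gamma-convergence}, the sum can only converge if each summand converges individually. In particular the elastic piece satisfies
\[
\frac{1}{\varepsilon^2}\int_\Omega \Phi\bigl(\exp(\varepsilon E(M_\varepsilon))(I + \varepsilon \nabla u_\varepsilon)\bigr)\d x \longrightarrow \frac{1}{2}\int_\Omega \bbC(\strain(u_0) + e(m_0)):(\strain(u_0) + e(m_0))\d x.
\]

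Next, I would introduce the rescaled strain $G_\varepsilon := \varepsilon^{-1}\bigl(\exp(\varepsilon E(M_\varepsilon))(I+\varepsilon\nabla u_\varepsilon) - I\bigr)$, whose symmetric part converges weakly in $L^2(\Omega)$ to $\strain(u_0) + e(m_0)$; this uses the weak $W^{1,2}$-convergence of $u_\varepsilon$ and the strong $L^2$-convergence of $M_\varepsilon$ inherited from the magnetization bound via the Heisenberg relation \eqref{Heisenberg}. On the good set $A_\varepsilon := \{\varepsilon|G_\varepsilon|\leq \delta\}$ for $\delta$ small, the lower bound \eqref{neq:quadrat} is effective, while on $A_\varepsilon^c$ the $p$-growth in hypothesis \hyperref[hyp:d]{(d)} forces $|A_\varepsilon^c|\to 0$ and provides equi-integrability of $\chi_{A_\varepsilon^c}G_{\varepsilon,sym}$ in $L^p$ for $p>2$. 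Combining these with the convergence of the elastic energy above yields
\[
\int_\Omega \bbC\,\chi_{A_\varepsilon}G_{\varepsilon,sym}:\chi_{A_\varepsilon}G_{\varepsilon,sym}\d x \longrightarrow \int_\Omega \bbC(\strain(u_0)+e(m_0)):(\strain(u_0)+e(m_0))\d x.
\]
The positive-definiteness of $\bbC$ on symmetric matrices, together with the weak $L^2$ convergence, then upgrades to strong $L^2$ convergence of $\chi_{A_\varepsilon}G_{\varepsilon,sym}$, and the equi-integrability kills the complement. Subtracting the strong convergence $e(M_\varepsilon) \to e(m_0)$ in $L^2$ gives strong convergence of $\strain(u_\varepsilon)$, and Korn concludes.

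The main obstacle is the truncation step: the quadratic Taylor bound is only useful where $\varepsilon|G_\varepsilon|$ is small, yet $\nabla u_\varepsilon$ can in principle blow up on a small set. Controlling the complementary region requires exploiting the superquadratic $p$-growth from \eqref{def:gp} together with the uniform $L^p$-type estimate for $\dist(\nabla y_\varepsilon, SO(d))/\varepsilon$ extracted from Proposition~\ref{prop:compactness} (in the spirit of Friesecke--James--Müller rigidity). Once this bookkeeping is in place, the separation into the three energy contributions and the quadratic coercivity of $\bbC$ on the symmetric part make the remainder routine.
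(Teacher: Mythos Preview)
Your proposal is correct and takes essentially the same approach as the paper: the authors simply write ``argue as in \cite[Subsection~7.2]{FriJamMul2006} (see also \cite[Subsection~3.2]{Bresciani-Kruzik})'', and what you have written is precisely a fleshed-out version of that Friesecke--James--M\"uller scheme (convergence of energies $\Rightarrow$ convergence of each term $\Rightarrow$ norm convergence of the truncated rescaled strains via the coercivity of~$\bbC$ on symmetric matrices, then Korn to pass from $\strain(u_\varepsilon)$ to $\nabla u_\varepsilon$). The only place where your wording is loose is the control of the complement~$A_\varepsilon^c$: the phrase ``equi-integrability of $\chi_{A_\varepsilon^c}G_{\varepsilon,\mathrm{sym}}$ in $L^p$'' is not quite the right object --- what is actually used is that $\int_{A_\varepsilon^c}\Phi/\varepsilon^2\to 0$ together with the lower bound $\Phi(I+\varepsilon G_\varepsilon)\geq g_p(\dist(I+\varepsilon G_\varepsilon,SO(d)))$ and the $L^p$-estimate~\eqref{eq:rigidity-2} for $\varepsilon\nabla u_\varepsilon$ --- but you already flag this as the main bookkeeping step and point to the correct ingredients.
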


The proof of Theorems~\ref{thm:gamma-convergence}--\ref{thm:convergence_of_minimizers} is postponed to Section~\ref{s:conv}. Compactness issues are discussed in Section~\ref{s:compactness}.

\begin{remark}[Discussion for the case $p \leq d$]
    Let us notice that the $p$-integrability of deformations' gradients plays an essential role only for the compactness.
    The result of this paper can be extended for the case $p=d$ using the means of quasiconformal analysis, see details in, e.g.,~\cite{HenKos2014}. 
    First, an admissible deformation $y\in \mathcal{Y}$ is continuous and differentiable a.e., and a continuous representative satisfies the Lusin $(N)$-condition.
    Thus, the composition $m\circ y$ is well defined, and the change-of-variables formula is valid for the chosen representative. 
    Second, instead of $y(\Omega) \setminus y(\partial \Omega)$ one needs to use the topological image, 
    see e.g.~\cite{MulSpe1995,BarHenMor2017},
    to define 
    $m$
    properly.
    Finally, there is no uniform convergence $y_\varepsilon \to y$, only locally uniform convergence.
    Therefore, the proof of compactness in Proposition~\ref{prop:compactness} needs more refined estimates.
    
    There are several options in the literature, which offer ways to deal with the case  $p > d-1$.
    For instance, one may use the approach prohibiting cavitations as in  M\"uller--Spector \cite{MulSpe1995}, Barchiesi--Henao--Mora-Coral \cite{BarHenMor2017}, and Bresciani \cite{Bre2023}.
    Although a stronger coercivity condition is needed to guarantee weak convergence of determinants. 
    Another option, is to consider deformations with finite surface energy as in \cite{BarHenMor2017}, (INV)-condition or 
    weak limits of homeomorphisms \cite{BouHenMol2020,DolHenMal2021}.
    Admissible deformations in
    the case $p=d-1$ may be considered within the limits of homeomorphisms \cite{DolHenMol2024} or mappings with finite surface energy \cite{BarHenMorCorRod2023,BarHenMorCorRod2024}.
\end{remark}

\section{Auxiliary results}

The homogeneous Sobolev space  
$L^{1,2}(\bbR^d)$
is defined
as the space of $\varphi \in L^2_{loc}(\bbR^d)$ such that $\nabla \varphi \in L^2(\bbR^d; \bbR^d)$.
We equip the space $L^{1,2}(\bbR^d)$ with a seminorm $\| \nabla \varphi\|_{L^2(\bbR^d; \bbR^d)}$. 
It can be as well supplied with a norm 
$\|\varphi\|_{L^{1,2}(\bbR^d)} :=\| \nabla \varphi\|_{L^2(\bbR^d; \bbR^d)} + \| \varphi\|_{L^2(\omega)}$, 
where $\omega\subset \bbR^{d}$ is arbitrary bounded open nonempty set.
For more details, the interested reader is referred to \cite[Chapter 1]{Maz2011}.
The magnetic potential $v_m \in L^{1,2}(\bbR^d)$ generated by magnetization $m$ exists, unique up to a constant, and continuously depends on $m$.

\begin{proposition}
\label{prop:magn}
    For every $f \in L^{2}(\bbR^d;\bbR^d)$ there exists $v_{f} \in L^{1,2}(\bbR^d)$ which is unique up to additive constants, such that 
    \begin{equation*}
        \int_{\bbR^d}(-\nabla v_f + f)\cdot \nabla \varphi \d z = 0 \qquad \text{for all } \varphi\in L^{1,2}(\bbR^d)
    \end{equation*}
    and
    \begin{equation*}
        \| \nabla v_f\|_{L^2} \leq \|f\|_{L^2}.
    \end{equation*}
    Moreover, if $f_{j} \to f$ strongly in $L^{2}$, then
        $\nabla v_{f_j}\to \nabla v_{f}$ strongly in  $L^2(\bbR^d, \bbR^{d})$.
\end{proposition}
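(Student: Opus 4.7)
The plan is to cast the problem as a minimization (equivalently, a Riesz-representation) problem on the quotient Hilbert space $\dot H := L^{1,2}(\bbR^d)/\bbR$, endowed with the inner product $\langle \varphi,\psi\rangle := \int_{\bbR^d}\nabla\varphi\cdot\nabla\psi\,\d z$. The seminorm $\|\nabla\cdot\|_{L^2}$ on $L^{1,2}(\bbR^d)$ is a norm on $\dot H$, and standard results (see \cite[Chapter~1]{Maz2011}) show that $(\dot H,\langle\cdot,\cdot\rangle)$ is a Hilbert space. This packages all the functional-analytic content needed for existence, uniqueness and linearity of the solution map.

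First I would define the linear functional $T_f:\dot H\to\bbR$ by $T_f(\varphi):=\int_{\bbR^d}f\cdot\nabla\varphi\,\d z$. Cauchy--Schwarz gives
\[
|T_f(\varphi)|\le \|f\|_{L^2}\,\|\nabla\varphi\|_{L^2},
\]
so $T_f\in\dot H^*$ with $\|T_f\|_{\dot H^*}\le \|f\|_{L^2}$. By the Riesz representation theorem on $\dot H$, there exists a unique element $[v_f]\in\dot H$ with $\langle v_f,\varphi\rangle=T_f(\varphi)$ for every $\varphi\in\dot H$, which is exactly the weak equation stated in the proposition; uniqueness in $\dot H$ corresponds to uniqueness of $v_f$ up to an additive constant in $L^{1,2}(\bbR^d)$. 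Choosing the test function $\varphi=v_f$ in the weak formulation yields $\|\nabla v_f\|_{L^2}^2=\int f\cdot\nabla v_f\,\d z\le \|f\|_{L^2}\|\nabla v_f\|_{L^2}$, which gives the estimate $\|\nabla v_f\|_{L^2}\le\|f\|_{L^2}$.

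For the continuity claim, I would exploit linearity of the solution map $f\mapsto \nabla v_f$. Indeed, if $f_j\to f$ in $L^2(\bbR^d;\bbR^d)$, then $\nabla v_{f_j}-\nabla v_f=\nabla v_{f_j-f}$ (this is immediate from the weak formulation, testing against an arbitrary $\varphi\in L^{1,2}(\bbR^d)$), and applying the estimate just proved to $f_j-f$ yields
\[
\|\nabla v_{f_j}-\nabla v_f\|_{L^2}\le \|f_j-f\|_{L^2}\longrightarrow 0,
\]
which is the required strong convergence in $L^2(\bbR^d;\bbR^d)$.

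I do not anticipate a genuine obstacle: the only point that requires care is making the functional-analytic setup rigorous, namely verifying that $\dot H$ is a Hilbert space and that the seminorm on $L^{1,2}(\bbR^d)$ descends to a norm on the quotient. Both are classical (a standard way is to use a bounded Poincar\'e-type inequality on a fixed bounded open set $\omega$, combined with the norm $\|\cdot\|_{L^{1,2}}$ defined in the text). Once this is in place, the rest of the argument is an essentially one-line application of Riesz representation followed by a linearity/estimate argument for continuous dependence, so I would keep the proof short and refer to \cite{Maz2011} for the background on $L^{1,2}(\bbR^d)$.
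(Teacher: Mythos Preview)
Your argument is correct and is the standard Riesz/Lax--Milgram proof on the homogeneous Sobolev space; the paper itself does not give a proof but simply cites \cite[Proposition~8.8 \& Theorem~8.9]{BarHenMor2017} and \cite[Proof of Theorem~4.1]{RybLus2005}, which contain precisely this kind of argument. So your proposal is in fact more detailed than the paper's own treatment, and nothing further is needed.
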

\begin{proof}
    This proposition is classical, see for example \cite[Proposition 8.8 \& Theorem 8.9]{BarHenMor2017} or \cite[Proof of Theorem 4.1]{RybLus2005}.
\end{proof}

Let us now recall sufficient conditions for a deformation to be injective if the gradient of the displacement is small enough in the operator norm, i.e., $|B|_{O} = \sup_{v\in \bbR^d} \frac{|Bv|}{|v|}$.  
\begin{theorem}[\hspace{-1pt}{\cite[Theorem 5.5-1]{Cia1988}}] \label{thm:injectivity}
Let $\Omega \subset \bbR^{d}$ be an open set.
\begin{enumerate}
    \item[(1)] 
        Let $y=id+u \colon \Omega \to \bbR^{d}$ be differentiable in $x\in \Omega$.
        Then
        \[
            |\nabla u(x)|_{O} <1 \Rightarrow \det \nabla y(x) >0.
        \] 
    \item[(2)] 
        Let $\Omega \subset \bbR^{d}$ be a bounded domain with Lipschitz boundary. Then there exists a constant $c(\Omega)$ such that any 
        $y=id+u \in C^{1}(\bar{\Omega};\bbR^d)$ with 
        $\sup_{x\in \bar{\Omega}}|\nabla u(x)|_O<c(\Omega) $
        is injective.
\end{enumerate}
\end{theorem}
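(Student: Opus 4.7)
The statement has two essentially independent parts, so I would handle them in order.

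\emph{Part (1): pointwise positivity of the Jacobian.} This reduces to a linear-algebra claim about the single matrix $A := \nabla u(x)$ with $|A|_O < 1$. I would use a homotopy / continuity argument. Consider the polynomial
\[
   \varphi(t) := \det(I + tA), \qquad t \in [0,1].
\]
Then $\varphi$ is continuous with $\varphi(0) = 1$. For every $t \in [0,1]$ and every unit vector $v \in \bbR^d$,
\[
   |(I + tA)v| \geq |v| - t\,|A|_O\,|v| \geq 1 - |A|_O > 0,
\]
so $I + tA$ is invertible and $\varphi(t) \neq 0$ throughout $[0,1]$. Since $\varphi$ is continuous and nowhere zero on the connected interval $[0,1]$, its sign is constant, and $\varphi(0) = 1$ forces $\varphi(1) = \det \nabla y(x) > 0$.

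\emph{Part (2): global injectivity.} The key tool I would invoke is a quasi-geodesic property of bounded Lipschitz domains: there exists a constant $K = K(\Omega) > 0$ such that any two points $x_1, x_2 \in \overline{\Omega}$ can be joined by a piecewise-$C^1$ curve $\gamma \colon [0,1] \to \overline{\Omega}$ with $\mathrm{length}(\gamma) \leq K |x_1 - x_2|$. Granting this, for $y = id + u \in C^1(\overline{\Omega}; \bbR^d)$ and arbitrary distinct $x_1, x_2$, I would write
\[
   u(x_1) - u(x_2) = \int_0^1 \nabla u(\gamma(s))\, \dot\gamma(s)\, \d s
\]
and estimate
\[
   |u(x_1) - u(x_2)| \leq \sup_{\overline{\Omega}} |\nabla u|_O \cdot \mathrm{length}(\gamma) \leq K \, \sup_{\overline{\Omega}} |\nabla u|_O \cdot |x_1 - x_2|.
\]
A triangle-inequality rearrangement then gives
\[
   |y(x_1) - y(x_2)| \geq |x_1 - x_2| - |u(x_1) - u(x_2)| \geq \bigl(1 - K \sup_{\overline{\Omega}} |\nabla u|_O\bigr) |x_1 - x_2|.
\]
Choosing $c(\Omega) := 1/K$, whenever $\sup_{\overline{\Omega}} |\nabla u|_O < c(\Omega)$ the right-hand side is strictly positive, so $y(x_1) \neq y(x_2)$ and $y$ is injective.

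\emph{Main obstacle.} Everything except the quasi-geodesic property is short. The work is in producing the constant $K$ for a general bounded Lipschitz $\Omega$. I would derive it from the definition of Lipschitz boundary: cover $\partial\Omega$ by finitely many charts $U_i$ on which $U_i \cap \Omega$ is the subgraph of a Lipschitz function, so that $U_i \cap \Omega$ is bi-Lipschitz equivalent to a half-ball and in particular carries a local quasi-geodesic constant. Together with a finite cover of the interior by balls and a Lebesgue-number argument, this patches into a uniform $K(\Omega)$. This piece is standard but is the only nontrivial input; the rest is a one-line application of the mean value theorem along the constructed path.
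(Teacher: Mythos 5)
Your proof is correct, and the paper itself offers no proof of this statement — it is quoted verbatim from Ciarlet \cite[Theorem 5.5-1]{Cia1988}, whose argument is exactly yours: the homotopy $t\mapsto\det(I+t\nabla u(x))$ for part (1), and the quasiconvexity (geodesic property) of bounded Lipschitz domains combined with the mean value inequality and the reverse triangle inequality for part (2). The only point worth tightening is the integration of $\nabla u(\gamma(s))\dot\gamma(s)$ when $\gamma$ touches $\partial\Omega$, which is harmless since $u\in C^{1}(\bar\Omega;\bbR^d)$ means $\nabla u$ extends continuously to $\bar\Omega$ (or one perturbs the path into $\Omega$).
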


To establish compactness, we need the following geometric rigidity, proven in \cite[Theorem~3.1]{FriJamMul2002} for $p=2$ and in \cite[Section 2.4]{Conti-Schweizer} for $p \in (1, +\infty)$.

\begin{theorem}
\label{lem:gp-rigidity}
    There exists a constant $C = C(\Omega, p) > 0$ such that for every $v \in W^{1,p}(\Omega; \R^{d})$ there exists a constant rotation $R \in SO(d)$ satisfying
    \begin{equation}\label{eq:gp-rigidity}
        \int_\Omega |\nabla v - R|^{p} \d x \leq C \int_\Omega g_p({\dist}(\nabla v, SO(d))) \d x,
    \end{equation}
    where $g_p$ is defined by \eqref{def:gp}. In particular, we may choose $R$ such that
    \begin{equation}
    \label{e:rotation-choice}
        \left| R - \frac{1}{\mathcal{L}^{d} (\Omega)} \int_{\Omega} \nabla v \, \d x \right| = {\dist} \left(\frac{1}{\mathcal{L}^{d} (\Omega)} \int_{\Omega} \nabla v \, \d x , SO(d) \right)\!.
    \end{equation}
\end{theorem}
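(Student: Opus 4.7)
The plan is to follow the strategy of Friesecke--James--M\"uller for the $L^2$ case, then bootstrap to general $p > 1$ by a Lipschitz-truncation argument in the spirit of Conti--Schweizer. The starting observation is that the right-hand side $\int_\Omega g_p(\dist(\nabla v, SO(d)))\, \d x$ behaves like $\int \dist^2(\nabla v, SO(d))$ on the \emph{elastic} set $\{\dist(\nabla v, SO(d)) \leq 1\}$ and like $\int \dist^p(\nabla v, SO(d))/p$ on the \emph{plastic} set $\{\dist(\nabla v, SO(d)) > 1\}$. The first task is to show that on the elastic set one can essentially linearize and find a rotation close to $\nabla v$ in $L^2$; the second task is to pay only a power-$p$ cost on the plastic set.

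For the $L^2$ step I would first prove a local, linearized estimate on a ball $B \subset \Omega$ under the smallness hypothesis $\|\nabla u - I\|_{L^\infty(B)} \leq \delta$: polar decomposition gives the Taylor expansion $\dist(F, SO(d))^2 = |F_{sym} - I|^2 + O(|F - I|^3)$ near $I$, so, combined with Korn's inequality $\int |\nabla u - A|^2 \leq C \int |(\nabla u)_{sym} - I|^2$ for the antisymmetric matrix $A$ best approximating $\nabla u - I$, one extracts a rotation $R$ near $I + A$ satisfying $\|\nabla u - R\|_{L^2(B)}^2 \leq C \int_B \dist^2(\nabla u, SO(d))\, \d x$. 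To remove the smallness hypothesis, the key FJM ingredient is harmonic approximation: on a ball $B_r$ one solves $\Delta h = 0$ with $h = v$ on $\partial B_r$, obtains via Caccioppoli-type estimates that $\|\nabla v - \nabla h\|_{L^2(B_r)}^2 \leq C \int_{B_r} \dist^2(\nabla v, SO(d))\, \d x$, and then applies the linearized estimate to the smooth $h$. A Vitali-type covering, together with the elementary fact that two rotations both close to the same matrix must be close to each other, allows the local rotations to be patched into a single global rotation for $v$.

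To pass from $p = 2$ to general $p$, I would introduce a Lipschitz truncation $v_\lambda$ of $v$ associated to the Hardy--Littlewood maximal function: set $E_\lambda := \{M(|\nabla v|) > \lambda\}$ and define $v_\lambda$ by McShane extension on $E_\lambda$, so that $|\nabla v_\lambda| \leq C\lambda$ a.e.\ and $v_\lambda = v$ outside a set of measure controlled by $\lambda^{-p}\|\nabla v\|_{L^p}^p$. For an appropriate choice of $\lambda$, the $L^2$ rigidity applied to $v_\lambda$ yields a rotation $R$, and the desired bound then follows by splitting $\int_\Omega |\nabla v - R|^p = \int_{E_\lambda} + \int_{\Omega \setminus E_\lambda}$: the second integral is handled by the $L^2$ estimate on $v_\lambda$ combined with the uniform bound $|\nabla v_\lambda - R| \leq C$, while the first is controlled by the weak-$L^p$ bound on $M$ together with $g_p(t) \geq t^p/p$ on $\{t > 1\}$. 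The specific choice of $R$ satisfying \eqref{e:rotation-choice} is obtained as a nearest-point projection onto $SO(d)$ of the mean $\frac{1}{|\Omega|}\int_\Omega \nabla v$, using that any two rotations admissible for \eqref{eq:gp-rigidity} must be mutually close by a triangle inequality. The hard part is the simultaneous calibration of $\lambda$ with the local rigidity scale so that the truncation does not spoil sharpness: the argument hinges on a careful level-set decomposition showing that the excess mass on $E_\lambda$ is controlled by the plastic contribution $\int_{\{\dist(\nabla v, SO(d)) > 1\}} \dist^p(\nabla v, SO(d))\, \d x$, which is precisely the second branch of $g_p$.
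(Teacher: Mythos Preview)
Your outline is a faithful sketch of how one would reprove the Friesecke--James--M\"uller and Conti--Schweizer rigidity theorems from scratch, and carried out carefully it would indeed yield \eqref{eq:gp-rigidity}. The paper, however, does something far shorter: it takes the classical $L^{p}$ rigidity estimate
\[
\int_{\Omega} |\nabla v - R|^{p}\, \d x \leq C \int_{\Omega} \dist^{p}(\nabla v, SO(d))\, \d x
\]
(valid for every $p\in(1,+\infty)$, with references to \cite{FriJamMul2002} and \cite{Conti-Schweizer}) as a black box, and then simply records the pointwise comparison
\[
\frac{1}{2p}(t^{p}+t^{2}) \leq \frac{1}{p}\max\{t^{p},t^{2}\} \leq g_{p}(t) \leq \frac{1}{2}\max\{t^{p},t^{2}\} \leq \frac{1}{2}(t^{p}+t^{2}),
\]
so in particular $t^{p}\leq 2p\,g_{p}(t)$ for $p\geq 2$. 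Substituting $t=\dist(\nabla v, SO(d))$ gives \eqref{eq:gp-rigidity} in one line. Your elastic/plastic decomposition, harmonic approximation, and Lipschitz truncation are exactly the machinery that goes into proving the cited $L^{p}$ rigidity in the first place; once that result is available, the passage to a $g_{p}$-right-hand side is just this elementary inequality, and no level-set calibration is needed. For the choice of $R$ in \eqref{e:rotation-choice} the paper again cites an external reference; your nearest-point-projection/triangle-inequality argument is correct and is essentially what that reference does.
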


\begin{proof}     
    It is enough to resort to the classical rigidity estimates for $p \in (1, +\infty)$ and notice that
    \begin{equation*}
        \frac{1}{2p} (t^p + t^2)\leq \frac{1}{p} \max \{t^{p}, t^{2}\} \leq g_{p}(t) \leq \frac{1}{2} \max \{t^{p}, t^{2}\} \leq \frac{1}{2} (t^p + t^2).
    \end{equation*}
    The particular choice of~$R$ as in~\eqref{e:rotation-choice} can be ensured arguing, e.g., as in~\cite[Theorem~3.1]{ACFS-23}.
\end{proof}

\section{Compactness}
\label{s:compactness}

This section is devoted to the proof of the following compactness result for sequences with bounded energy $\mathcal{G}_{\varepsilon}$.

\begin{proposition}[Compactness]
\label{prop:compactness}
    Under the assumptions of Theorem~\ref{thm:linearization}, let $(u_{\varepsilon}, m_{\varepsilon}) \in \mathcal{A}^{w}_{\varepsilon}$ be such that 
    \begin{equation}
    \label{e:energy-bound}
        \sup_{\varepsilon>0} \, \mathcal{G}_{\varepsilon} (u_{\varepsilon}, m_{\varepsilon})<+\infty.
    \end{equation}
    Then, there   exists  $(u_0,m_0)\in W^{1,2}_w(\Omega;\bbR^d) \times  W^{1,2}(\Omega; \bbS^{d-1})$ such that, up to a subsequence,
    \[
        \begin{aligned}
            & y_{\varepsilon}:= id + \varepsilon u_\varepsilon \to id && \text{ strongly in } W^{1, p}(\Omega;\bbR^d), \\
            & u_\varepsilon \weakto u_0 && \text{ weakly in } W^{1,2}(\Omega;\bbR^d),\\
            & \chi_{{y}_\varepsilon(\Omega)}{m}_\varepsilon \to \chi_{\Omega} m_0 && \text{ strongly in } L^2(\bbR^d;\bbR^d),\\
            & \chi_{{y}_\varepsilon(\Omega)}\nabla{m}_\varepsilon \weakto \chi_{\Omega} \nabla m_0 && \text{ weakly in } L^2(\bbR^d;\bbR^{d\times d}),\\
            & m_{\varepsilon} \circ y_{\varepsilon} \to m_0 &&\text{ strongly in }L^{1} (\Omega; \bbR^{d}),\\
            &  m_\varepsilon \circ {y}_{\varepsilon} \det \nabla y_{\varepsilon} \to m_0 && \text{ strongly in } L^r(\Omega;\bbR^d) \text{ for every } 1\leq r < \infty.
        \end{aligned}
    \]
\end{proposition}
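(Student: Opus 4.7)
The plan is to prove compactness in three stages: first derive a geometric rigidity bound for the deformation $y_\varepsilon$, then use the Dirichlet condition on $\Gamma$ to extract weak $W^{1,2}$-compactness of $u_\varepsilon$ and strong $W^{1,p}$-convergence $y_\varepsilon \to id$, and finally transfer the magnetization to the reference configuration to overcome the moving-domain issue.

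\textbf{Step 1: Rigidity.} I first combine the energy bound with assumption (d) to obtain
\[
    \int_\Omega g_p\bigl(\dist(F_{el,\varepsilon}, SO(d))\bigr) \d x \leq C \varepsilon^2,\qquad F_{el,\varepsilon} = \exp(\varepsilon E(M_\varepsilon))(I+\varepsilon \nabla u_\varepsilon).
\]
Since the Heisenberg constraint forces $|M_\varepsilon|=1$ pointwise, one has $\|\exp(\varepsilon E(M_\varepsilon))-I\|_\infty \leq C\varepsilon$, and the elementary inequalities \eqref{e:g-properties-1}--\eqref{e:g-properties-2} transfer the bound to $\int_\Omega g_p(\dist(\nabla y_\varepsilon,SO(d))) \d x \leq C\varepsilon^2$. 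Theorem~\ref{lem:gp-rigidity}, applied separately with exponents $p$ and $2$ and with the common choice \eqref{e:rotation-choice} of the rotation, then produces $R_\varepsilon \in SO(d)$ satisfying
\[
    \int_\Omega |\nabla y_\varepsilon - R_\varepsilon|^p \d x + \int_\Omega |\nabla y_\varepsilon - R_\varepsilon|^2 \d x \leq C \varepsilon^2,
\]
where for the $L^2$-part I split $\Omega$ at $\{\dist(\nabla y_\varepsilon, SO(d))\leq 1\}$ and exploit $g_p(t)\geq \min(t^2/2, t^p/p)$.

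\textbf{Step 2: Linearization and $y_\varepsilon \to id$.} The boundary datum $y_\varepsilon=id+\varepsilon w$ on $\Gamma$ forces $R_\varepsilon = I + O(\varepsilon)$: by the $L^p$-rigidity, $y_\varepsilon - R_\varepsilon x$ is close, in $W^{1,p}$, to a constant $c_\varepsilon$, and comparing its trace on $\Gamma$ (which has positive $\mathcal{H}^{d-1}$-measure) with $\varepsilon w$ determines both $A_\varepsilon := \varepsilon^{-1}(R_\varepsilon - I)$ and $c_\varepsilon/\varepsilon$ as uniformly bounded. Dividing the $L^2$-rigidity by $\varepsilon^2$ then gives $\|\nabla u_\varepsilon - A_\varepsilon\|_{L^2} \leq C$, and Poincaré with the Dirichlet trace on $\Gamma$ yields $\|u_\varepsilon\|_{W^{1,2}} \leq C$; up to a subsequence, $u_\varepsilon \weakto u_0 \in W^{1,2}_w(\Om;\bbR^d)$. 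For the strong $W^{1,p}$-convergence, $\int_\Omega|\varepsilon \nabla u_\varepsilon|^p \d x \leq C(\varepsilon^2 + |R_\varepsilon-I|^p|\Om|) \to 0$, so $\nabla y_\varepsilon \to I$ in $L^p(\Om)$; by Sobolev embedding ($p>d$) $y_\varepsilon \to id$ uniformly, hence $\det \nabla y_\varepsilon \to 1$ in $L^{p/d}$ and $|y_\varepsilon(\Om)\setminus\Om| + |\Om\setminus y_\varepsilon(\Om)| \to 0$.

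\textbf{Step 3: Magnetization (the main obstacle).} The moving domain $y_\varepsilon(\Om)$ makes the magnetization compactness the most delicate step. I move to the reference configuration via $\tilde m_\varepsilon := m_\varepsilon \circ y_\varepsilon$ and $M_\varepsilon := \tilde m_\varepsilon \det \nabla y_\varepsilon$: the Heisenberg constraint gives $|M_\varepsilon|=1$ a.e., so $M_\varepsilon$ is bounded in $L^\infty(\Om)$, while hypothesis~(e) and the energy bound yield $1/\det \nabla y_\varepsilon$ bounded in $L^a(\Om)$. Since $y_\varepsilon \in W^{1,p}$ with $p>d$ satisfies the Lusin $(N)$-condition, the Sobolev chain rule gives $\nabla \tilde m_\varepsilon = (\nabla m_\varepsilon)\circ y_\varepsilon \, \nabla y_\varepsilon$; combined with a change of variables and Hölder's inequality, this bounds $\tilde m_\varepsilon$ in $W^{1,q}(\Om;\bbR^d)$ for some $q\in(1,2)$ depending on $a$. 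Rellich extracts a subsequence with $\tilde m_\varepsilon \to m_0$ strongly in $L^1(\Om)$; combined with $\det\nabla y_\varepsilon\to 1$ this yields $M_\varepsilon \to m_0$ a.e., and using $|M_\varepsilon|=1$ together with dominated convergence, $M_\varepsilon \to m_0$ strongly in $L^r(\Om)$ for every $r<\infty$, with $|m_0|=1$ a.e. Finally, the exchange-energy bound provides weak $L^2(\bbR^d;\bbR^{d\times d})$-compactness of $\chi_{y_\varepsilon(\Om)}\nabla m_\varepsilon$; testing against $\varphi\in C_c^\infty(\Om)$, whose support lies in $y_\varepsilon(\Om)$ for small $\varepsilon$ by uniform convergence $y_\varepsilon\to id$, identifies the weak limit with $\chi_\Om \nabla m_0$, so $m_0 \in W^{1,2}(\Om;\bbS^{d-1})$; the strong $L^2(\bbR^d)$-convergence of $\chi_{y_\varepsilon(\Om)}m_\varepsilon$ to $\chi_\Om m_0$ then follows from the Lagrangian strong convergence combined with $|y_\varepsilon(\Om)\setminus\Om|+|\Om\setminus y_\varepsilon(\Om)|\to 0$.
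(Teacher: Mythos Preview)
Your Steps~1 and~2 are essentially the paper's Lemmas~\ref{l:epsilon-well} and~\ref{prop:compactness_u}, and they are fine.

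The gap is in Step~3, in the claim that $\tilde m_\varepsilon = m_\varepsilon\circ y_\varepsilon$ is bounded in $W^{1,q}(\Omega;\bbR^d)$ for some $q>1$ (or even $q=1$). Write $h=|(\nabla m_\varepsilon)\circ y_\varepsilon|$, $g=|\nabla y_\varepsilon|$, $J=\det\nabla y_\varepsilon$. From the exchange energy you control $\int_\Omega h^2 J\,\d x$, from hypothesis~(e) you control $\int_\Omega J^{-a}\,\d x$, and from Step~2 you control $\int_\Omega g^p\,\d x$. Any H\"older splitting of $\int_\Omega (hg)^q\,\d x$ into these three pieces forces the constraint
\[
    p \;\geq\; \frac{2a}{a-1}\,,
\]
which is \emph{not} implied by the standing assumptions $p>d$ and $a>1$ (take e.g.\ $d=2$, $p=3$, $a=3/2$, where $2a/(a-1)=6$). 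So the pull-back $m_\varepsilon\circ y_\varepsilon$ need not lie in any $W^{1,q}(\Omega)$ uniformly in~$\varepsilon$, and Rellich is unavailable. A secondary issue is that the last sentence---deducing strong $L^2(\bbR^d)$-convergence of $\chi_{y_\varepsilon(\Omega)}m_\varepsilon$ from the Lagrangian convergence of $m_\varepsilon\circ y_\varepsilon$---hides exactly the same Eulerian/Lagrangian transfer that you are trying to avoid.

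The paper circumvents the lack of control on $J^{-1}$ by \emph{not} pulling back: it works with $m_\varepsilon$ itself on the fixed sets $\Omega_\delta\subset y_\varepsilon(\Omega)\cap\Omega$, where the $W^{1,2}$-bound is immediate from the exchange energy, and obtains $m_0\in W^{1,2}(\Omega)$ by a diagonal argument in~$\delta$. Strong $L^2$-convergence of $\chi_{y_\varepsilon(\Omega)}m_\varepsilon$ then follows from equi-integrability of $|m_\varepsilon|^2$ (via~\eqref{e:equiintegrability}) and Vitali. The convergence $m_\varepsilon\circ y_\varepsilon\to m_0$ in $L^1$ is handled separately by a Lusin--Lipschitz truncation of $m_\varepsilon$ at level $\varepsilon^\lambda$ (cf.~\eqref{e:mu-5}--\eqref{e:mu-7}), which trades Sobolev regularity of the composition for quantitative smallness of the bad set; this is the device that replaces your missing $W^{1,q}$-bound.
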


Before proving Proposition~\ref{prop:compactness}, we show the following lemmas, which provide boundedness of the displacement variable in terms of the energy~$\mathcal{G}_{\varepsilon}$.

\begin{lemma}
    \label{l:epsilon-well}
    Under the assumptions of Theorem \ref{thm:gamma-convergence}, there exist positive constants~$C_1=C_1(\Om, d, p)>0$ and $C_2=C_2(\Om, d, p)>0$ such that for every $\varepsilon \in (0, 1)$ and every $(u, m) \in \mathcal{A}^{w}_{\varepsilon}$ it holds
    \begin{equation}
    \label{eq:epsilon-well-energy}
        \varepsilon^{2}\mathcal{G}_{\varepsilon} (u, m) \geq C_1 \int_{\Om} g_{p} (\dist (\nabla y_{\varepsilon}, SO(d))\, \d x - C_2 \varepsilon^{2},
    \end{equation}
    for $y_{\varepsilon} = id + \varepsilon u$.
\end{lemma}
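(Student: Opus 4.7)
The plan is to reduce \eqref{eq:epsilon-well-energy} to a pointwise comparison between $\Phi(F_{el,\varepsilon})$ and $g_p(\dist(\nabla y_\varepsilon, SO(d)))$. First, I will discard the exchange and magnetostatic contributions, which are nonnegative, so that
\[
    \varepsilon^2 \mathcal{G}_\varepsilon(u,m) \geq \int_\Omega \Phi\bigl(\exp(\varepsilon E(M))(I+\varepsilon \nabla u)\bigr) \d x
    = \int_\Omega \Phi(F_{el,\varepsilon}) \d x,
\]
where $M(x) = m(y_\varepsilon(x)) \det \nabla y_\varepsilon(x)$ as in \eqref{Heisenberg}, and then apply hypothesis~\hyperref[hyp:d]{(d)} to bound $\Phi(F_{el,\varepsilon})$ from below by $g_p(\dist(F_{el,\varepsilon},SO(d)))$.

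Next I will compare $\dist(F_{el,\varepsilon}, SO(d))$ with $\dist(\nabla y_\varepsilon, SO(d))$. The Heisenberg constraint forces $|M(x)|=1$ a.e., whence $|E(M(x))| \leq 1 + d^{-1/2}$ is bounded by a constant depending only on $d$. Consequently, for $\varepsilon \in (0,1)$,
\[
    |\exp(\varepsilon E(M))|\leq C_0, \qquad |\exp(\varepsilon E(M))-I|\leq C_0\varepsilon.
\]
Writing $\nabla y_\varepsilon = \exp(-\varepsilon E(M)) F_{el,\varepsilon}$ and choosing $R\in SO(d)$ realizing the distance of $F_{el,\varepsilon}$ to $SO(d)$, the triangle inequality yields
\[
    \dist(\nabla y_\varepsilon,SO(d)) \leq |\nabla y_\varepsilon - R| \leq C_0 \,\dist(F_{el,\varepsilon},SO(d)) + C_0 \varepsilon.
\]

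Finally, I will invoke the two properties of $g_p$ in \eqref{e:g-properties-1} and \eqref{e:g-properties-2}. Namely, $g_p(s+t) \leq C(g_p(s)+t^2)$ and $g_p(C_0 s)\leq C'g_p(s)$ give, after the substitution from the previous step,
\[
    g_p(\dist(\nabla y_\varepsilon,SO(d))) \leq C''\bigl(g_p(\dist(F_{el,\varepsilon},SO(d))) + \varepsilon^2\bigr),
\]
so that pointwise $\Phi(F_{el,\varepsilon}) \geq \frac{1}{C''} g_p(\dist(\nabla y_\varepsilon,SO(d))) - \varepsilon^2$. Integrating over $\Omega$ produces the lemma with $C_1 = 1/C''$ and $C_2 = |\Omega|$.

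I do not anticipate a serious obstacle: the proof is essentially bookkeeping on matrix norms combined with the algebraic properties of $g_p$ already recorded in the paper. The only point that requires mild care is the uniform bound on $\exp(\varepsilon E(M))$, which relies crucially on the Heisenberg constraint $|M|=1$; without it, $E(M)$ would be unbounded and the argument would fail. All constants are independent of $(u,m)$ and depend only on $\Omega$, $d$, and $p$, as required.
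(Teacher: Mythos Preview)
Your proof is correct and follows essentially the same approach as the paper: discard the nonnegative exchange and magnetostatic terms, apply hypothesis \hyperref[hyp:d]{(d)} to bound $\Phi(F_{el,\varepsilon})$ below by $g_p(\dist(F_{el,\varepsilon},SO(d)))$, then use the uniform bound on $|E(M)|$ (from the Heisenberg constraint) to compare $\dist(\nabla y_\varepsilon,SO(d))$ with $\dist(F_{el,\varepsilon},SO(d))$ up to an additive $C\varepsilon$, and conclude via the properties \eqref{e:g-properties-1}--\eqref{e:g-properties-2} of $g_p$. The only cosmetic slip is that you record the bounds for $\exp(\varepsilon E(M))$ but in the triangle-inequality step you actually need them for $\exp(-\varepsilon E(M))$; since $|E(M)|$ is the relevant quantity, the same estimates hold and nothing changes.
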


\begin{proof}
Let $(u, m) \in \mathcal{A}^{w}_{\varepsilon}$ and $\varepsilon \in (0,1)$. By assumption \hyperref[hyp:d]{(d)} we can estimate
\begin{equation}
\label{eq:G-epsilon-well}
    \varepsilon^{2}\mathcal{G}_{\varepsilon} (u, m) \geq C \int_{\Om} g_{p} (\dist \big(\exp (\varepsilon e(\nabla y_{\varepsilon}, m \circ y_{\varepsilon}) \nabla y_{\varepsilon}; SO(d) \big) \, \d x .
\end{equation}
By definition of $e(\nabla y_{\varepsilon}, m \circ y_{\varepsilon})$, by the properties of the exponential map, and by the fact that $\det{\nabla y_{\varepsilon}}| m \circ y_{\varepsilon}| = 1 $ in $\Om$, we have that for $x \in \Om$ and $R \in SO(d)$
\begin{align*}
    \dist & (\nabla y_{\varepsilon}; \exp(- \varepsilon e(\nabla y_{\varepsilon}, m \circ y_{\varepsilon})) SO(d)) \leq | \nabla y_{\varepsilon} - \exp(- \varepsilon e(\nabla y_{\varepsilon}, m \circ y_{\varepsilon}))R | 
    \\
    &
    \leq | \exp (- \varepsilon  e(\nabla y_{\varepsilon}, m \circ y_{\varepsilon})) | \, | \exp(\varepsilon  e(\nabla y_{\varepsilon}, m \circ y_{\varepsilon})) \nabla y_{\varepsilon} - R| \leq C | \exp(\varepsilon  e(\nabla y_{\varepsilon}, m \circ y_{\varepsilon})) \nabla y_{\varepsilon} - R|.
\end{align*}
This implies that for a.e.~$x \in \Om$
\begin{align*}
    \dist (\nabla y_{\varepsilon}; \exp(- \varepsilon e(\nabla y_{\varepsilon}, m \circ y_{\varepsilon})) SO(d)) \leq C \,\dist (\exp(\varepsilon e(\nabla y_{\varepsilon}, m \circ y_{\varepsilon}) \nabla y_{\varepsilon}; SO(d))) \, . 
\end{align*}
 Again by the properties of the exponential, we further estimate 
\begin{align}
\label{e:new}
    \dist (\nabla y_\varepsilon ; SO(d)) \leq C \, \dist (\nabla y_\varepsilon ; \exp(-\varepsilon e(\nabla y_\varepsilon, m \circ y_\varepsilon) SO(d)) + C \varepsilon \qquad \text{a.e.~in~$\Om$.}
\end{align}
Combining~\eqref{eq:G-epsilon-well}--\eqref{e:new} and the properties of $g_{p}$~\eqref{e:g-properties-1}--\eqref{e:g-properties-2}, we get~\eqref{eq:epsilon-well-energy}.
\end{proof}

\begin{lemma}
\label{prop:compactness_u}
    Under the assumptions of Theorem~\ref{thm:gamma-convergence}, there exists a positive constant $C = C(\Omega, d, p)>0$ such that for every $\varepsilon>0$, every $(u, m)  \in \mathcal{A}^{w}_{\varepsilon}$, and every $\varepsilon>0$ the following estimates hold:
    \begin{align}
        & \label{eq:rigidity-1}\int_{\Omega} | \nabla u|^{2}\d x \leq C \Bigg(\mathcal{G}_{\varepsilon} (u,m) + \int_{ \Gamma } | w |^{2}\d \mathcal{H}^{d-1}  +1 \bigg),\\
        & \label{eq:rigidity-2} \int_{\Omega} | \varepsilon \nabla u|^{p}\d x \leq C \varepsilon^{2} \bigg(\mathcal{G}_{\varepsilon} (u,m) + \int_{ \Gamma } | w|^2 \d \mathcal{H}^{d-1} +1  \bigg).
    \end{align}
\end{lemma}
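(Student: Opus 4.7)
\textbf{Proof plan for Lemma \ref{prop:compactness_u}.}

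The strategy is to combine the geometric rigidity of Theorem~\ref{lem:gp-rigidity} with the Dirichlet-type boundary condition $u=w$ on $\Gamma$, so as to identify the optimal rotation associated with $\nabla y_\varepsilon$ and show that it must be close to $I$. First, by Lemma~\ref{l:epsilon-well},
\[
    \int_\Omega g_p(\mathrm{dist}(\nabla y_\varepsilon,SO(d)))\,\d x \le C\varepsilon^2\bigl(\mathcal{G}_\varepsilon(u,m)+1\bigr),
\]
so applying Theorem~\ref{lem:gp-rigidity} to $v=y_\varepsilon$ yields a rotation $R_\varepsilon\in SO(d)$, chosen as in~\eqref{e:rotation-choice}, with $\int_\Omega|\nabla y_\varepsilon-R_\varepsilon|^p\,\d x\le C\varepsilon^2(\mathcal{G}_\varepsilon+1)$. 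Using $t^2\le 2p\,g_p(t)$ together with the $L^2$ version of the Friesecke--James--M\"uller rigidity (which applies to $\int\mathrm{dist}^2(\nabla y_\varepsilon,SO(d))\le C\int g_p(\mathrm{dist}(\nabla y_\varepsilon,SO(d)))$) one obtains analogously
\[
    \int_\Omega|\nabla y_\varepsilon-R_\varepsilon|^2\,\d x \le C\varepsilon^2(\mathcal{G}_\varepsilon+1).
\]

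To estimate $|R_\varepsilon-I|$, I exploit~\eqref{e:rotation-choice} and $I\in SO(d)$:
\[
    |R_\varepsilon-I|\le 2\Bigl|\tfrac{1}{|\Omega|}\textstyle\int_\Omega(\nabla y_\varepsilon-I)\,\d x\Bigr|=\tfrac{2\varepsilon}{|\Omega|}\bigl|\textstyle\int_\Omega\nabla u\,\d x\bigr|.
\]
The divergence theorem and the equality $u=w$ on $\Gamma$ give $\int_\Omega\nabla u=\int_\Gamma w\otimes\nu\,\d\mathcal{H}^{d-1}+\int_{\partial\Omega\setminus\Gamma}u\otimes\nu\,\d\mathcal{H}^{d-1}$. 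Applying the trace theorem and the Poincar\'e--Friedrichs inequality to $\tilde u:=u-w$ (which vanishes on $\Gamma$) yields
\[
    |R_\varepsilon-I|^2\le C\varepsilon^2\bigl(\|w\|^2_{L^2(\Gamma)}+\|\nabla u\|^2_{L^2(\Omega)}+1\bigr).
\]
Writing $\varepsilon\nabla u=(\nabla y_\varepsilon-R_\varepsilon)+(R_\varepsilon-I)$ and squaring (resp.\ raising to the $p$-th power) gives
\[
    \int_\Omega|\nabla u|^2\d x \le \tfrac{C}{\varepsilon^2}\int_\Omega|\nabla y_\varepsilon-R_\varepsilon|^2\d x + \tfrac{C}{\varepsilon^2}|R_\varepsilon-I|^2|\Omega|,
\]
and similarly for the $L^p$ norm of $\varepsilon\nabla u$. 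Plugging the above two estimates produces the desired bounds~\eqref{eq:rigidity-1} and~\eqref{eq:rigidity-2} up to the cyclic dependence on $\|\nabla u\|_{L^2}$.

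The main obstacle is the absorption of the $\|\nabla u\|_{L^2}^2$ term appearing on the right-hand side. This is resolved by the Korn--Friedrichs inequality applied to $\tilde u$: since $\tilde u$ vanishes on $\Gamma$ and $\mathcal{H}^{d-1}(\Gamma)>0$, one has $\|\nabla u\|_{L^2}\le C\|e(u)\|_{L^2}+C$ (with constants depending on $\Omega$, $d$ and on the fixed $w\in W^{2,\infty}$). Now $\varepsilon e(u)=\mathrm{sym}(\nabla y_\varepsilon)-I=\mathrm{sym}(\nabla y_\varepsilon-R_\varepsilon)+(\mathrm{sym}(R_\varepsilon)-I)$, and the quadratic control $|\mathrm{sym}(R_\varepsilon)-I|\le|R_\varepsilon-I|^2/2$ valid for $R_\varepsilon\in SO(d)$, combined with the boundedness $|R_\varepsilon-I|\le 2\sqrt d$, allows the $|R_\varepsilon-I|^4$-contribution to be re-absorbed into the small-parameter side; this closes the bootstrap and proves~\eqref{eq:rigidity-1}. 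Estimate~\eqref{eq:rigidity-2} follows with the same scheme, using the $L^p$ bound of Step~1 instead of the $L^2$ one and noting that $\varepsilon^p\le\varepsilon^2$ for $\varepsilon\in(0,1)$ and $p\ge 2$; for $\varepsilon\ge 1$ both inequalities are trivial.
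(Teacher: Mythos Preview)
Your overall strategy matches the paper's: apply Lemma~\ref{l:epsilon-well} and Theorem~\ref{lem:gp-rigidity}, then control $|R_\varepsilon-I|$ via the boundary condition. The difficulty is in how you carry out the last step.

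Your bound $|R_\varepsilon-I|^2\le C\varepsilon^2\bigl(\|w\|_{L^2(\Gamma)}^2+\|\nabla u\|_{L^2(\Omega)}^2+1\bigr)$ puts $\|\nabla u\|_{L^2}$ on the right-hand side because the divergence theorem sees the trace of $u$ on all of $\partial\Omega$, not just on $\Gamma$. The Korn--Friedrichs detour you propose does not resolve this circularity. After using $|\mathrm{sym}(R_\varepsilon)-I|\le\tfrac12|R_\varepsilon-I|^2$ you obtain
\[
    \|\nabla u\|_{L^2}^2 \le C\bigl(\mathcal{G}_\varepsilon+1\bigr)+C\varepsilon^2\bigl(\|w\|_{L^2(\Gamma)}^2+\|\nabla u\|_{L^2}^2+1\bigr)^2,
\]
and the term $C\varepsilon^2\|\nabla u\|_{L^2}^4$ cannot be absorbed without an a priori bound on $\varepsilon\|\nabla u\|_{L^2}^2$, which you do not have. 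Invoking $|R_\varepsilon-I|\le 2\sqrt d$ to replace $|R_\varepsilon-I|^4$ by $4d\,|R_\varepsilon-I|^2$ simply returns you to the original circular inequality $\|\nabla u\|_{L^2}^2\le C\bigl(\ldots+\|\nabla u\|_{L^2}^2\bigr)$, which is empty. So the bootstrap does not close.

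The paper circumvents this entirely: following \cite[Proposition~3.4]{DalNegPer2002}, one applies Poincar\'e and the trace theorem to $x\mapsto y_\varepsilon(x)-R_\varepsilon x-c$ (for a suitable constant $c$), whose gradient is $\nabla y_\varepsilon-R_\varepsilon$ and is therefore already controlled in $L^2$ by $C\varepsilon^2(\mathcal{G}_\varepsilon+1)$. Evaluating on $\Gamma$, where $y_\varepsilon=x+\varepsilon w$, yields directly
\[
    |R_\varepsilon-I|^2\le C\varepsilon^2\Bigl(\mathcal{G}_\varepsilon(u,m)+\int_\Gamma|w|^2\,\d\mathcal{H}^{d-1}+1\Bigr),
\]
with no appearance of $\|\nabla u\|_{L^2}$ on the right, and hence no absorption is needed. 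This also keeps the constant $C$ dependent only on $\Omega$, $d$, $p$ (and the fixed set $\Gamma$), whereas your Korn step would introduce a dependence on $w$ beyond its $L^2(\Gamma)$-norm, contrary to the statement of the lemma.
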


\begin{proof}
Along the proof, we denote by $C$ a generic positive constant depending only on $\Om$, $d$, and $p$, but not on $\varepsilon$ and on $(u, m) \in \mathcal{A}_{\varepsilon}^{w}$.

Let us prove~\eqref{eq:rigidity-1}. For $(u, m) \in \mathcal{A}^w_\varepsilon$ we set $y_{\varepsilon} := id + \varepsilon u$. Applying the Geometric Rigidity Theorem~\ref{lem:gp-rigidity} and Lemma \ref{l:epsilon-well}, we deduce that there exists $R_{\varepsilon} \in SO(d)$ such that
\begin{equation*}
    \int_{\Omega} | \nabla y_{\varepsilon} - R_{\varepsilon} |^{2} \d x \leq C \int_{\Omega} g_p(\dist (\nabla y_{\varepsilon} ; SO(d))) \d x \leq C \varepsilon^{2} \big(\mathcal{G}_{\varepsilon} (u,m) + 1) .
\end{equation*}
By triangle inequality, we continue with
\begin{equation}
\label{e:new-2}
    \int_{\Omega} | \nabla y_{\varepsilon} - I |^{2} \d x \leq C \big(\varepsilon^{2} \big(\mathcal{G}_{\varepsilon} (u,m) + 1) +  | R_\varepsilon - I|^2 \big).
\end{equation}
Arguing as in~\cite[Proposition~3.4]{DalNegPer2002}, we get that
\begin{equation}
\label{e:new-3}
     | R_{\varepsilon} - I |^{2} \leq C \varepsilon^{2} \bigg(\mathcal{G}_{\varepsilon}(u,m) + \int_{ \Gamma } | w|^{2} \d \mathcal{H}^{d-1} +1\bigg),
\end{equation}
which, together with \eqref{e:new-2}, implies \eqref{eq:rigidity-1}.

As for \eqref{eq:rigidity-2}, we notice that, by Theorem \ref{lem:gp-rigidity} and Lemma~\ref{l:epsilon-well}, for the same rotation $R_\varepsilon$ it holds
\begin{equation*}
    \int_{\Omega} | \nabla y_{\varepsilon} - R_{\varepsilon} |^{p} \d x \leq C \int_{\Omega} g_p(\dist (\nabla y_{\varepsilon} ; SO(d))) \d x +C\varepsilon^2 \leq C \varepsilon^{2}\big(\mathcal{G}_{\varepsilon} (u,m) + 1\big).
\end{equation*}
Hence, recalling \eqref{e:new-3} we deduce that
\begin{align*}
    \int_{\Omega} | \varepsilon \nabla u|^{p}\d x & \leq 2^{p-1} \bigg(\int_{\Omega} | \nabla y_{\varepsilon} - R_{\varepsilon} |^{p} \d x +  | R_{\varepsilon} - I |^{p} \bigg) 
    \leq C \bigg(\varepsilon^{2} \mathcal{G}_{\varepsilon} (u,m) + \varepsilon^2 + | R_{\varepsilon} - I |^{p}  \bigg) 
    \\
    &
    \leq C \bigg(\varepsilon^{2} \mathcal{G}_{\varepsilon} (u,m) + \varepsilon^2+  | R_{\varepsilon} - I |^{2}  \bigg) \leq  
 C \varepsilon^{2} \bigg(\mathcal{G}_{\varepsilon} (u,m) +  \int_{ \Gamma }  | w|^2 \d \mathcal{H}^{d-1} +1  \bigg),\nonumber
\end{align*}
where we have used the fact that $p >d \geq 2 $ and that $R_{\varepsilon} \in SO(d)$, so that $| R_{\varepsilon} - I| \leq 2\sqrt{d}$. This concludes the proof of the proposition.
\end{proof}

We are now ready to prove the compactness result of Proposition~\ref{prop:compactness}.   We closely follow the arguments of~\cite[Proposition~4.3]{Bre2021} (see also \cite[Proposition 3.7]{Bresciani-Kruzik}). 

\begin{proof}[Proof of Proposition~\ref{prop:compactness}]
Let us notice that the Poincar\'e inequality and Lemma~\ref{prop:compactness_u} guarantee, up to a subsequence, weak convergence $u_{\varepsilon}\weakto u_{0}$ in $W^{1,2}(\Omega,\mathbb{R}^d)$.
Moreover, \eqref{eq:rigidity-2} 
implies that 
\begin{equation}
\label{e:y-id}
    \| \nabla y_{\varepsilon} - I\|_{L^{p}} \leq C \varepsilon^{\frac{2}{p}}
\end{equation}
for some positive constant~$C$ independent of~$\varepsilon$. 
And thus, strong convergence $y_{\varepsilon} \to id$ in $W^{1,p}(\Omega,\mathbb{R}^d)$.

For simplicity of notation, let us set $\Lambda_{\varepsilon}:= \exp(\varepsilon e(\nabla y_{\varepsilon}, m_{\varepsilon} \circ y_{\varepsilon})) \nabla y_{\varepsilon}$. Then, by the assumptions on~$\Phi$ we have that
\begin{align}
\label{e:equiintegrability}
    \int_{\Omega} \frac{1}{(\det \nabla y_{\varepsilon})^{a}} \d x &  = \int_{\Omega} \frac{1}{(\det \Lambda_{\varepsilon})^{a}} \d x 
    \\
    &
    \leq C \int_{\Omega} W(\nabla y_{\varepsilon}, m_{\varepsilon} \circ y_{\varepsilon}) \d x + C\mathcal{L}^{d}(\Omega) \leq C \varepsilon^{2} \mathcal{G}_{\varepsilon} (y_{\varepsilon}, m_{\varepsilon}) +  C\mathcal{L}^{d}(\Omega). \nonumber
\end{align}
Thus, $1/\det \nabla y_{\varepsilon}$ is equi-integrable in~$\Omega$.

For $\delta>0$ let us set
\[
    \Omega_{\delta}:= \{ x \in \Omega: \, \dist (x, \partial\Omega) >\delta\}
    \qquad \text{ and } \qquad
    \mu_{\varepsilon}^\delta :=  \chi_{\Omega_{\delta}} m_{\varepsilon}.
\]
Without loss of generality, we may assume that $\Omega_{\delta}$ is a Lipschitz set. Since $y_{\varepsilon} \to id$ in $W^{1, p} (\Omega; \bbR^{d})$ and $p >d$, we have that $y_{\varepsilon} \to id$ in $C^{0} (\overline\Omega; \bbR^{d})$. 
Hence, 
for $\varepsilon>0$ small enough, we may assume that $\Omega_{\delta} \subseteq y_{\varepsilon} (\Omega)$.
Without going into details about the degree theory, see \cite{FonGan1995}, let us provide rather a standard argument to prove this fact.
For $y_{\varepsilon}$, let us define the topological degree $\deg(x,y_\varepsilon,\Omega)$ as a topological degree of its continuous representative.
Notice that $h_{t}(x):=t y_{\varepsilon} (x) + (1-t) x$ is a homotopy between 
$y_{\varepsilon}$ and $id$, and  
by the uniform convergence $y_{\varepsilon}\to id$, for any $x\in \Omega_{\delta}$ it holds that $x\not\in y_\varepsilon(\partial \Omega)$ for any small enough $\varepsilon$.
Thus, $\deg(x,y_\varepsilon,\Omega) = \deg(x,id,\Omega) >0$, in other words, $x\in y_{\varepsilon} (\Omega)$.

Next, by change of variables, using the equality $|m_{\varepsilon} \circ y_{\varepsilon}| \det \nabla y_{\varepsilon} = 1$ a.e.~in~$\Omega$, we get that
\begin{align}
\label{e:mu}
    \int_{\bbR^{d}} |\mu_{\varepsilon}^\delta|^{2}   \d x  & = \int_{\Omega_{\delta}} | m_{\varepsilon}|^{2}  \d x 
    \\
    & \leq \int_{y_{\varepsilon}(\Omega)} | m_{\varepsilon}|^{2}  \d x
    = \int_{\Omega} | m_{\varepsilon} \circ y_{\varepsilon}|^{2} \, \det \nabla y_{\varepsilon}   \d x = \int_{\Omega} \frac{1}{\det \nabla y_{\varepsilon}}   \d x  .\nonumber
\end{align}
Moreover, by definition of~$\mu_{\varepsilon}^{\delta}$ it holds
\begin{align}
\label{e:mu-2}
    \int_{\Omega_{\delta}} | \nabla \mu_{\varepsilon}^\delta|^{2}   \d x \leq \int_{y_{\varepsilon}(\Omega)} | \nabla m_{\varepsilon}|^{2}  \d x \leq \mathcal{G}_{\varepsilon} (y_{\varepsilon}, m_{\varepsilon}) .
\end{align}
Therefore, $\mu_{\varepsilon}^\delta$ is bounded in $W^{1,2}(\Omega_{\delta}; \bbR^{d})$ uniformly for all $\delta>0$. 
By a diagonal argument, we find $m_0 \in W^{1,2}_{loc} (\Omega; \bbR^{d})$ such that $\mu_{\varepsilon}^{\delta} \rightharpoonup m_0$ weakly in $W^{1,2}_{loc} (\Omega; \bbR^{d})$.
While the Sobolev Embedding Theorems provide that 
\begin{equation}
\label{pointwise_conv_mu}
    m_{\varepsilon}(x) = \mu_{\varepsilon}^{\delta}(x) \to m_0(x)
    \qquad \text{for a.e.~}x\in\Omega_{\delta}. 
\end{equation}
Passing to the liminf as $\varepsilon \to 0$ and then to the limit as $\delta \to 0$ in~\eqref{e:mu} and~\eqref{e:mu-2}, we further deduce that $m_0 \in W^{1,2}(\Omega; \bbR^{d})$. 
Note further that the second line in \eqref{e:mu} ensures that 
$\chi_{y_{\varepsilon} (\Omega)} m_{\varepsilon}$ is bounded in $L^{2} (\bbR^{d}; \bbR^{d})$, and thus converges weakly to some $\mu \in L^{2} (\bbR^{d}; \bbR^{d})$. 
Using pointwise convergence~\eqref{pointwise_conv_mu}, we conclude 
$\chi_{y_{\varepsilon} (\Omega)} m_{\varepsilon}(x) \to \chi_\Omega m_0(x)$ a.e.~in $\bbR^d$.
Therefore $\mu = \chi_\Omega m_0$ a.e., and, up to a subsequence,
\begin{equation}
\label{weak_conv_m}
    \chi_{y_{\varepsilon} (\Omega)} m_{\varepsilon} \weakto \chi_{\Omega} m_0 \quad \text{ weakly in } L^{2}(\bbR^{d}; \bbR^{d}).
\end{equation}
Now it is not difficult to see that $|\chi_{y_{\varepsilon} (\Omega)} m_{\varepsilon}|^{2}$ converges in measure and, moreover, is equi-integrable by \eqref{e:equiintegrability} and \eqref{e:mu}.
Thus, 
applying the Lebesgue--Vitali Convergence Theorem (see \cite[Th.~4.5.4]{Bog2007} or \cite[Ch.~6, Ex.~10]{Rud1987}) to $|\chi_{y_{\varepsilon} (\Omega)} m_{\varepsilon}|^2$, we obtain convergence of norms
\[
    \|\chi_{y_{\varepsilon} (\Omega)} m_{\varepsilon}\|_{L^2} \to \|\chi_{\Omega} m_0\|_{L^2}.
\]
This, together with weak convergence~\eqref{weak_conv_m} guarantees strong convergence 
$\chi_{y_{\varepsilon} (\Omega)} m_{\varepsilon} \to \chi_{\Omega} m_{0}$ in $L^{2}(\bbR^{d}; \bbR^{d})$.
Since the weak convergence in~$L^{2}(\Om; \R^{d})$ of $m_{\varepsilon} \chi_{y_{\varepsilon} (\Om)}$ to~$m_0$ has been already shown, we deduce the strong $L^{2}$-convergence.
    
By~\eqref{e:energy-bound} we know that $ \chi_{y_{\varepsilon} (\Om)}\nabla m_{\varepsilon}$ is bounded in~$L^{2}(\R^{d}; \bbR^{d \times d})$. For every test $\varphi \in L^{2}(\R^{d}; \bbR^{d \times d})$ we consider
\begin{align*}
     \int_{\R^{d}} & (\chi_{y_{\varepsilon} (\Om)} \nabla m_{\varepsilon} -  \chi_{\Om} \nabla m_0) : \varphi \d x 
     \\
     &
     =  \int_{\Om_{\delta}} (\chi_{y_{\varepsilon} (\Om)} \nabla m_{\varepsilon}  - \chi_{\Om} \nabla m_0) : \varphi \d x +  \int_{\R^{d}\setminus \Om_{\delta}} (\chi_{y_{\varepsilon} (\Om)} \nabla m_{\varepsilon} -  \chi_{\Om} \nabla m_0) : \varphi \d x .
\end{align*}
Then, the first integral tends to~$0$ since $\Om_{\delta} \subseteq y_{\varepsilon} (\Om) \cap \Om$, $m_{\varepsilon} = \mu_{\varepsilon}^{\delta}$ in~$\Om_{\delta}$, and $\mu_{\varepsilon}^{\delta} \rightharpoonup m_0$ weakly in $W^{1,2}(\Om_{\delta}; \R^{d})$. As for the second integral, we have that
\[
    \bigg|\int_{\R^{d}\setminus \Om_{\delta}} (\chi_{y_{\varepsilon} (\Om)} \nabla m_{\varepsilon}  -  \chi_{\Om} \nabla m_0) : \varphi \d x\bigg| \leq C \| \varphi\|_{L^{2} (C_{\delta})} ,
\]
where $C_{\delta} = \{x \in \R^{d}: \, \dist (x, \partial\Om) < \delta\}$. Since $\mathcal{L}^{d} (C_{\delta}) \to 0$ as $\delta \to 0$, we infer that $\chi_{y_{\varepsilon} (\Om)} \nabla m_{\varepsilon} $ converges to $\chi_{\Om} \nabla m_0 $ weakly in $L^{2}(\R^{d}; \bbR^{d \times d})$.

We now show that $m_{\varepsilon} \circ y_{\varepsilon}$ converges to $m_{0}$ in $L^{1} (\Omega; \bbR^{d})$. For $\delta>0$ and $\varepsilon \ll1$ we write
\begin{align}
\label{e:triangle-m}
    \int_{\Omega_{\delta}} | m_{\varepsilon} \circ y_{\varepsilon} - m_0|   \d x \leq \int_{\Omega_{\delta}} | m_{\varepsilon} \circ y_{\varepsilon} - \mu_{\varepsilon}^{\delta}|   \d x + \int_{\Omega_{\delta}} | \mu_{\varepsilon}^{\delta} - m_0|   \d x . 
\end{align}
For the second integral on the right-hand side of~\eqref{e:triangle-m} we simply have that $\mu_{\varepsilon}^{\delta} \to m_0$ in~$L^{2} (\Omega_{\delta}; \bbR^{d})$. As for the first term on the right-hand side of~\eqref{e:triangle-m}, we notice that $m_{\varepsilon} \circ y_{\varepsilon}$ and $\mu_{\varepsilon}^{\delta}$ are equi-integrable in~$\Omega_{\delta}$. 
Thus, for $\eta >0$ there exists $\rho>0$ such that for every $A \subseteq \Omega_{\delta}$ measurable
\[
    \mathcal{L}^{d} (A) < \rho \ \Rightarrow \ \limsup_{\varepsilon \to 0} \int_{A} | m_{\varepsilon} \circ y_{\varepsilon} - \mu_{\varepsilon}^{\delta}|   \d x < \eta .
\]

Let us consider $\overline{\delta} \in (0, \delta)$ such that $\mathcal{L}^{d} (\Omega_{\overline\delta} \setminus \Omega_{\delta}) < \rho$ and set $A_{\delta, \varepsilon} := y^{-1}_{\varepsilon} (\Omega_{\delta})$. For $\varepsilon>0$ small enough, we have that $A_{\delta, \varepsilon} \subseteq \Omega_{\overline{\delta}}$. We rewrite
\begin{align}
\label{e:mu-3}
    \int_{\Omega_{\delta}} | m_{\varepsilon} \circ y_{\varepsilon} - \mu_{\varepsilon}^{\delta}|   \d x = \int_{\Omega_{\delta} \cap A_{\delta, \varepsilon}} | m_{\varepsilon} \circ y_{\varepsilon} - \mu_{\varepsilon}^{\delta}|   \d x + \int_{\Omega_{\delta} \setminus A_{\delta, \varepsilon}} | m_{\varepsilon} \circ y_{\varepsilon} - \mu_{\varepsilon}^{\delta}|   \d x .
\end{align}
We notice that, by change of variable,
\[
    \mathcal{L}^{d} (\Omega_{\delta}) = \mathcal{L}^{d} (y_{\varepsilon} (A_{\delta, \varepsilon})) = \mathcal{L}^{d} (A_{\delta, \varepsilon}) + \int_{A_{\delta, \varepsilon}} (\det \nabla y_{\varepsilon} - 1)   \d x .
\]
Since $\det \nabla y_{\varepsilon} \to 1$ in $L^{1}(\Omega)$, $A_{\delta, \varepsilon} \subseteq \Omega_{\overline{\delta}}$, and $\mathcal{L}^{d} (\Omega_{\overline\delta} \setminus \Omega_{\delta}) < \rho$, we conclude that
\begin{align*}
    \limsup_{\varepsilon \to 0} \mathcal{L}^{d} (\Omega_{\delta}\setminus A_{\delta, \varepsilon}) & \leq \limsup_{\varepsilon \to 0} \Big(\mathcal{L}^{d} (\Omega_{\overline{\delta}}) - \mathcal{L}^{d} (A_{\delta, \varepsilon})\Big)
    \\
    &
    = \limsup_{\varepsilon \to 0} \Bigg(\mathcal{L}^{d} (\Omega_{\overline{\delta}}\setminus \Omega_{\delta}) - \int_{A_{\delta, \varepsilon}} (\det \nabla y_{\varepsilon} - 1)   \d x \Bigg)
    < \rho .
\end{align*}
In turn, this implies that 
\begin{align}
\label{e:mu-4}
    \limsup_{\varepsilon \to 0} \int_{\Omega_{\delta} \setminus A_{\delta, \varepsilon}} | m_{\varepsilon} \circ y_{\varepsilon} - \mu_{\varepsilon}^{\delta}|   \d x < \eta .
\end{align}
To estimate the integral on~$\Om_{\delta} \cap A_{\delta,\varepsilon}$, we fix $\lambda \in (-1/p, 0)$, extend~$\mu_{\varepsilon}^{\delta}$ from~$\Om_{\delta}$ to a map~$M_{\varepsilon}^{\delta} \in W^{1, 2}(\R^{d}; \R^{d})$ with~$\| M_{\varepsilon}^{\delta}\|_{W^{1, 2}(\R^{d})} \leq C(\Omega_{\delta}) \| \mu_{\varepsilon}^{\delta}\|^{2}_{W^{1, 2} (\Om_{\delta})}$, and use the Lusin approximation of Sobolev functions: for every $\varepsilon>0$ there exists a set $G_{\delta,\varepsilon}$ such that $M_{\varepsilon}^{\delta}$ is $\varepsilon^{\lambda}$-Lipschitz on~$G_{\delta,\varepsilon}$ and
\begin{align}
\label{e:mu-5}
    \mathcal{L}^{d} (\R^{d} \setminus G_{\delta,\varepsilon}) \leq \varepsilon^{-2\lambda} \int_{\R^{d}} | \nabla M_{\varepsilon}^{\delta}|^{2}  \d x \leq C(\Omega_{\delta}) \varepsilon^{-2\lambda} \| \mu_{\varepsilon}^{\delta}\|^{2}_{W^{1, 2} (\Om_{\delta})}  .
\end{align}
Setting~$X_{\delta,\varepsilon}:= y_{\varepsilon}^{-1} (G_{\delta,\varepsilon})$ and noticing that $m_{\varepsilon} = M_{\varepsilon}^{\delta}$ on $\Omega_{\delta} \cap G_{\delta,\varepsilon}$, we have that
\begin{align*}
    &\int_{\Om_{\delta} \cap A_{\delta, \varepsilon} \cap X_{\delta,\varepsilon} \cap G_{\delta,\varepsilon}}  | m_{\varepsilon} \circ y_{\varepsilon} - \mu_{\varepsilon}^{\delta}|   \d x  \\
    &
    \nonumber\qquad = \int_{\Om_{\delta} \cap A_{\delta, \varepsilon} \cap X_{\delta,\varepsilon} \cap G_{\delta,\varepsilon}} | M_{\varepsilon}^{\delta} \circ y_{\varepsilon} - M_{\varepsilon}^{\delta}|   \d x
     \leq \varepsilon^{\lambda} \mathcal{L}^{d} (\Om) \|  y_{\varepsilon} - id\|_{C^{0}} \leq  \varepsilon^{\frac{2}{p} + \lambda} \mathcal{L}^{d} (\Om) .
\end{align*}
Combining~\eqref{e:mu-5}, the convergence rate~\eqref{e:y-id}, and a change of variable, we infer that
\[
    \lim_{\varepsilon \to 0} \mathcal{L}^{d} (\Om_{\delta} \setminus G_{\delta,\varepsilon}) = 0 
    \qquad \text{and}\qquad 
    \lim_{\varepsilon \to 0} \mathcal{L}^{d} (\Om_{\delta} \setminus X_{\delta,\varepsilon}) = 0 . 
\]
Thus, 
\begin{align}
\label{e:mu-7}
    \limsup_{\varepsilon \to 0} \int_{\Om_{\delta} \cap A_{\delta,\varepsilon} \setminus (X_{\delta,\varepsilon} \cap G_{\delta,\varepsilon}}) | m_{\varepsilon}^\delta \circ y_{\varepsilon} - \mu_{\varepsilon}^\delta| \d x < \eta .
\end{align}

Combining~\eqref{e:triangle-m}, \eqref{e:mu-3}, \eqref{e:mu-4}, and~\eqref{e:mu-7} yields
\[
    \limsup_{\varepsilon \to 0} \int_{\Om_{\delta}} | m_{\varepsilon} \circ y_{\varepsilon} - m_0|  \d x < 3\eta .
\]
Since $\eta>0$ is arbitrary, this implies that $m_{\varepsilon} \circ y_{\varepsilon} \to m_0$ in $L^{1} (\Om_{\delta}; \R^{d})$ for every $\delta>0$. By the equi-integrability of $m_{\varepsilon} \circ y_{\varepsilon}$, this entails the convergence in~$L^{1}(\Om; \R^{d})$. 
Furthermore,
since strong $L^1$-convergence implies convergence a.e., we have
$m_{\varepsilon} \circ y_{\varepsilon} \det \nabla y_{\varepsilon} \to m_0$ a.e. and $|m_{\varepsilon} \circ y_{\varepsilon}| \det \nabla y_{\varepsilon} = |m_0|=1$ a.e.~in $\Omega$.
Thus, 
$m \in W^{1, 2} (\Om; \bbS^{d-1})$
and $m_{\varepsilon} \circ y_{\varepsilon} \det \nabla y_{\varepsilon} \to m_0$ in $L^r(\Omega;\bbR^d)$ for any $r \in [ 1, +\infty)$.
\end{proof}


\section{Proof of $\Gamma$-convergence}
\label{s:conv}

We are now able to prove Theorem~\ref{thm:gamma-convergence}.

\begin{proof}[Proof of Theorem \ref{thm:gamma-convergence}]
We divide the proof in lower and upper bound for the $\Gamma$-convergence.

\noindent{\em $\Gamma$-liminf inequality.} We start with the lower bound. Let $(u_{\varepsilon}, m_{\varepsilon}) \in \mathcal{A}_{\varepsilon}^{w}$ be such that
\[
    \sup_{\varepsilon>0} \, \mathcal{G}_{\varepsilon} (u_{\varepsilon}, m_{\varepsilon}) < +\infty .
\]
By Proposition~\ref{prop:compactness}, there exist 
$u \in W^{1, 2}_w(\Om; \R^{d})$ and $m \in W^{1, 2}(\Om; \mathbb{S}^{d-1})$ 
such that, up to a subsequence, 
$u_{\varepsilon} \rightharpoonup u$ weakly in $W^{1, 2} (\Om; \R^{d})$, 
$m_{\varepsilon} \circ y_{\varepsilon} \, \det \nabla y_{\varepsilon} \to m$ in $L^{r}(\Om; \R^{d})$ for any $1\leq r <\infty$, 
$m_{\varepsilon} \circ y_{\varepsilon} \to m$ in $L^{1}(\Om; \R^{d})$, $\chi_{y_{\varepsilon}(\Omega)} \nabla m_{\varepsilon}  \rightharpoonup \chi_{\Om} \nabla m $ weakly in~$L^{2}(\R^{d}; \bbR^{d \times d })$, 
and $\chi_{y_{\varepsilon}(\Om)} m_{\varepsilon}  \to \chi_{\Om} m $ in $L^{2}(\R^{d}; \R^{d})$. 
Then, by Proposition~\ref{prop:magn} we have that
\begin{equation*}
    \int_{\Om} |\nabla m|^{2}  \d x + \int_{\R^{d}} | \nabla v_{m}|^{2}  \d x \leq \liminf_{\varepsilon \to 0}  \int_{y_{\varepsilon} (\Om) } |\nabla m_{\varepsilon}|^{2}  \d z + \int_{\R^{d}} | \nabla v_{m_{\varepsilon}}|^{2}  \d z .
\end{equation*}

For a fixed $0<\alpha<1$ define the sets
\[
    E_{\varepsilon}:=\{x\in\Omega:|\nabla u_{\varepsilon} (x)| < \varepsilon^{-\alpha}\}.
\]
Then, by the Chebyshev inequality
\[
    \mathcal{L}^{d} (\Omega \setminus E_{\varepsilon}) = 
    \mathcal{L}^{d} \big(\left\{x\in\Omega:|\nabla u_{\varepsilon} (x)| \geq \varepsilon^{-\alpha}\right\} \big) \leq
    \frac{1}{\varepsilon^{-\alpha}} \int_{\Omega \setminus E_{\varepsilon}} |\nabla u_{\varepsilon} (x)|   {\rm d} x \leq 
    \varepsilon^{\alpha} |\Omega|^{1/2} \|\nabla u_{\varepsilon}\|_{L^2}
    \leq C \varepsilon^{\alpha}.
\]
Thus, $\chi_{E_\varepsilon} \to 1$ in measure
and $\chi_{E_{\varepsilon}} \nabla u_{\varepsilon} \weakto \nabla u$ in $L^{2}(\Omega; \bbR^{d \times d})$. 
Indeed, 
for $1\leq r < 2$ we may estimate 
\begin{align*} 
    \int_{\Omega\setminus E_\varepsilon} |\nabla u_{\varepsilon} - \nabla u|^r \d x & 
    \leq
    \|\nabla u_{\varepsilon} - \nabla u\|^{\frac{1}{r}}_{L^2} \cdot \mathcal{L}^{d} (\Omega \setminus E_\varepsilon)^{\frac{2-r}{2r}} 
    \leq C \varepsilon^{\frac{(2-r)\alpha}{2r}} \to 0.
\end{align*}
Therefore,
$\chi_{E_\varepsilon}\nabla u_{\varepsilon} = \nabla u_{\varepsilon}  - \chi_{\Omega \setminus E_\varepsilon}\nabla u_{\varepsilon}  \weakto \nabla u \text{ weakly in } L^r (\Om;  \bbR^{d \times d})$.
Since the weak limit is unique, we get that $\chi_{E_{\varepsilon}} \nabla u_{\varepsilon} \rightharpoonup \nabla u$ weakly in $L^{2}(\Omega; \bbR^{d \times d})$. 

Moreover, since  $m_\varepsilon \circ {y}_{\varepsilon} \det \nabla y_{\varepsilon} \to m $ strongly in $L^r(\Omega;\bbR^d)$  for every $1\leq r < \infty$
then 
$(\det \nabla y_{\varepsilon})^2 m_{\varepsilon}\circ y_{\varepsilon}\otimes m_{\varepsilon}\circ y_{\varepsilon} \to m\otimes m$  strongly in  $L^r(\Omega;\bbR^d)$. Thus, we have that $e (\nabla y_{\varepsilon}, {m_{\varepsilon}\circ y_{\varepsilon}}) \to e (m)$ and  $\chi_{E_{\varepsilon}}  e (\nabla y_{\varepsilon}, {m_{\varepsilon}\circ y_{\varepsilon}}) \to e (m)$ strongly in  $L^2(\Omega;\bbR^d)$.

Let us set 
\[
    \beta(\varepsilon):= (\exp(\varepsilon e(\nabla y_{\varepsilon}, m_{\varepsilon} \circ y_{\varepsilon}) - I - \varepsilon e(\nabla y_{\varepsilon}, m_{\varepsilon} \circ y_{\varepsilon})) \nabla y_{\varepsilon} + \varepsilon^{2} e(\nabla y_{\varepsilon}, m_{\varepsilon} \circ y_{\varepsilon}) \nabla u_{\varepsilon}.
\]
Recalling that $\| e (\nabla y_{\varepsilon}, m_{\varepsilon}\circ y_{\varepsilon}) \|_{\infty}$ and $\| \exp (\varepsilon e (\nabla y_{\varepsilon}, m_{\varepsilon}\circ y_{\varepsilon})\|_{\infty}$ are uniformly bounded
and $\Phi(I) = D \Phi(I) = 0$, by Taylor expansion we have that
\begin{align}
\label{e:liminf-many}
    \frac{1}{\varepsilon^{2}} & \int_{\Omega} W(I +  \varepsilon \nabla u_{\varepsilon}, m_{\varepsilon} \circ y_{\varepsilon})\d x 
    \\ 
    & = \frac{1}{\varepsilon^{2}} \int_{E_{\varepsilon}} W(I + \varepsilon \nabla u_{\varepsilon}, m_{\varepsilon} \circ y_{\varepsilon})\d x + 
    \frac{1}{\varepsilon^{2}} \int_{\Omega \setminus E_{\varepsilon}} W(I + \varepsilon \nabla u_{\varepsilon}, m_{\varepsilon} \circ y_{\varepsilon})\d x 
    \nonumber
    \\
    & 
    \geq \frac{1}{\varepsilon^{2}} \int_{E_{\varepsilon}} W(I + \varepsilon \nabla u_{\varepsilon}, m_{\varepsilon} \circ y_{\varepsilon})\d x =  
    \frac{1}{\varepsilon^{2}} \int_{E_{\varepsilon}} \Phi (\exp(\varepsilon e (\nabla y_{\varepsilon}, m_{\varepsilon}\circ y_{\varepsilon}) (I + \varepsilon \nabla u_{\varepsilon}))\d x 
    \nonumber
    \\
    & 
    = \frac{1}{\varepsilon^{2}} \int_{E_{\varepsilon}}  \Phi \big(I + \varepsilon (\nabla u_{\varepsilon} + e (\nabla y_{\varepsilon}, m_{\varepsilon}\circ y_{\varepsilon})) + \beta(\varepsilon) \big) \d x \nonumber
    \\
    & 
    \geq \frac{1}{{2}\varepsilon^{2}} \int_{E_{\varepsilon}}  \bbC (\varepsilon(\strain(u_{\varepsilon}) + e (\nabla y_{\varepsilon} , m_{\varepsilon}\circ y_{\varepsilon})) + \beta(\varepsilon)) : (\varepsilon(\strain (u_{\varepsilon}) + e (\nabla y_{\varepsilon}, m_{\varepsilon}\circ y_{\varepsilon})) + \beta(\varepsilon)) \d x \nonumber
    \\
    & \hspace{2em}
    - \int_{E_{\varepsilon}} \frac{\eta(| \varepsilon (\nabla u_{\varepsilon} + e (\nabla y_{\varepsilon}, m_{\varepsilon}\circ y_{\varepsilon})) + \beta(\varepsilon)|) | \varepsilon (\nabla u_{\varepsilon} + e (\nabla y_{\varepsilon}, m_{\varepsilon}\circ y_{\varepsilon}))  + \beta(\varepsilon)|^2}{\varepsilon^{2}}\d x \nonumber
    \\
    & 
    = \frac{1}{{2}} \int_{E_{\varepsilon}}  \bbC (\strain (u_{\varepsilon}) + e (\nabla y_{\varepsilon}, m_{\varepsilon}\circ y_{\varepsilon})) : (\strain (u_{\varepsilon})+ e (\nabla y_{\varepsilon}, m_{\varepsilon}\circ y_{\varepsilon})) \d x \nonumber
    \\
    & \hspace{2em}
    - 2\int_{E_{\varepsilon}} \eta(| \varepsilon (\nabla u_{\varepsilon} + e (\nabla y_{\varepsilon}, m_{\varepsilon}\circ y_{\varepsilon}) + \beta(\varepsilon))|) (|\nabla u_{\varepsilon} + e (\nabla y_{\varepsilon}, m_{\varepsilon}\circ y_{\varepsilon})|^{2} + \varepsilon^{-2} |\beta(\varepsilon)|^2)\d x \nonumber
    \\
    &
    \hspace{2em}
    - \frac{1}{2\varepsilon^{2}} \int_{E_{\varepsilon}} \mathbb{C} \beta(\varepsilon) : \beta(\varepsilon)\, \d x + \frac{1}{\varepsilon}\int_{E_{\varepsilon}} \mathbb{C} (\strain (u_{\varepsilon}) + e(\nabla y_{\varepsilon}, m_{\varepsilon}\circ y_{\varepsilon})) : \beta(\varepsilon) \d x. \nonumber
\end{align}
Since $| \nabla u_{\varepsilon}| \leq \varepsilon^{-\alpha}$ in $E_{\varepsilon}$ and $\alpha \in (0, 1)$, we have that $\| \varepsilon \nabla u_{\varepsilon} + e(\nabla y_{\varepsilon}, m_{\varepsilon} \circ y_{\varepsilon}) \|_{L^{\infty} (E_{\varepsilon})}$ is bounded. Hence, we estimate
\begin{equation}
\label{e:liminf-many2}
    \| \beta(\varepsilon)\|_{L^{\infty} (E_{\varepsilon})} \leq C \varepsilon^{2} (1 + \varepsilon^{-\alpha}). 
\end{equation}
for some $C>0$ independent of $\varepsilon$.
In particular, also $\| \beta(\varepsilon)\|_{L^{\infty} (E_{\varepsilon})}$ is bounded. Thus, up to a redefinition of~$C$, we have that 
\begin{align}
\label{e:liminf-many3}
    \| \eta&  (| \varepsilon (\nabla u_{\varepsilon} + e (\nabla y_{\varepsilon}, m_{\varepsilon}\circ y_{\varepsilon}) + \beta(\varepsilon))|) \|_{L^{\infty} (E_{\varepsilon})} 
    \\
    &
    \leq C \| \varepsilon (\nabla u_{\varepsilon} + e (\nabla y_{\varepsilon}, m_{\varepsilon}\circ y_{\varepsilon}) + \beta(\varepsilon))\|_{L^{\infty} (E_{\varepsilon})}
    \leq C \varepsilon^{1-\alpha} \nonumber
\end{align}
for $\varepsilon \in (0, 1)$.
Combining \eqref{e:liminf-many2} and \eqref{e:liminf-many3} and recalling that $e(\nabla y_{\varepsilon}, m_{\varepsilon} \circ y_{\varepsilon}) \to e(m)$ in $L^{2} (\Omega; \R^{d})$ (see Proposition~\ref{prop:compactness}), passing to the liminf in~\eqref{e:liminf-many} we infer that
\begin{align*}
    \liminf_{\varepsilon \to 0}\frac{1}{\varepsilon^{2}} \int_{\Omega} W(I & + \varepsilon \nabla u_{\varepsilon}, m_{\varepsilon} \circ y_{\varepsilon})\d x 
    \geq \frac{1}{{2}} \int_{\Omega} \bbC (\strain (u) + e (m)) : (\strain (u) + e(m))  \d x. 
\end{align*}

\noindent{\em $\Gamma$-$\limsup$ inequality.} We now construct a recovery sequence. 
Let $u_0 \in W^{2,\infty}_w(\Omega;\bbR^d)$ and $m_0 \in W^{1,2}(\Omega;\bbS^{d-1})$.
Define 
\begin{align}
    & y_{\varepsilon}(x)  = x + \varepsilon u_0(x), \label{def:y} \\
    & m_{\varepsilon}(x + \varepsilon u_0(x)) \det \nabla y_{\varepsilon}(x)  = m_0 (x). \label{def:m}
\end{align}
We want to prove that 
\begin{align}
    & y_\varepsilon \to id \text{ in } W^{1,p}(\Omega;\bbR^d),\label{conv:y}\\
    & u_\varepsilon = u_0 \to u_0 \text{ in } W^{1,2}(\Omega;\bbR^d),\label{conv:u}\\
    & \chi_{y_\varepsilon(\Omega)} m_\varepsilon \to \chi_\Omega m_0 \text{ in } L^{2}(\bbR^d;\bbR^d), \label{conv:m}\\ 
    & \chi_{y_\varepsilon(\Omega)} \nabla m_\varepsilon \to \chi_\Omega \nabla m_0 \text{ in }L^{2}(\bbR^d;\bbR^{d \times d}), \label{conv:nabla_m}\\ 
    & \limsup_{\varepsilon\to 0} \mathcal{G}_{\varepsilon}(u_{\varepsilon},m_{\varepsilon}) \leq \mathcal{G}_0(u_{0},m_{0}).
\end{align}

First, we notice that $y_{\varepsilon} \to id$ in $W^{2, \infty} (\Omega; \mathbb{R}^{d})$, since $\|y_\varepsilon - id\|_{W^{2, \infty}} \leq \varepsilon\|u_0\|_{W^{2, \infty}}$. This implies the convergences \eqref{conv:y} and \eqref{conv:u}.


The injectivity of $y_{\varepsilon}$ and the sign condition $\det \nabla y_{\varepsilon} >0$ follow from Theorem~\ref{thm:injectivity} ({\cite[Theorem 5.5-1]{Cia1988}}), since $\varepsilon \|\nabla u_0\|_{\infty} \leq c(\Omega)$ for small enough $\varepsilon$.
Moreover, 
$y_{\varepsilon}$ is bi-Lipschitz for small enough $\varepsilon$.
Indeed, there exist constants $0 < l < L < +\infty$ (that may depend on $u_0$ but not on $x$ and $\varepsilon$), such that for $\varepsilon>0$ small enough it holds that
$\sup_{x\in \Omega} |\nabla y_{\varepsilon}| = \sup_{x\in \Omega} |I + \varepsilon \nabla u_0(x)| \leq 1+ \varepsilon \sup_{x\in \Omega} |\nabla u_0(x)|  =: L_\varepsilon < L $ with $L_\varepsilon \to 1$ if $\varepsilon \to 0$.
At the same time, $\inf_{x\in \Omega} \det \nabla y_{\varepsilon} = \inf_{x\in \Omega}\det(I + \varepsilon \nabla u_0) = 1 + \varepsilon  \inf_{x\in \Omega} \tr \nabla u_0(x) + o(\varepsilon) = : l_\varepsilon > l>0$ and $l_\varepsilon \to 1$ if $\varepsilon \to 0$.
By the inverse function theorem $y_{\varepsilon}^{-1}$ is differentiable and 
$\nabla y_{\varepsilon}^{-1}(y_{\varepsilon}(x)) = (\nabla y_{\varepsilon}(x))^{-1} \approx I - \varepsilon \nabla u_0(x)$.
Thus, $m_\varepsilon$ defined in \eqref{def:m} can be explicitly written as 
\[
    m_{\varepsilon}(z) = \frac{1}{\det \nabla y_\varepsilon} m_0 \circ y_{\varepsilon}^{-1}(z) \text{ for } z\in y_\varepsilon(\Omega).
\]
Hence, we have that 
\[
    |m_{\varepsilon}(z)| = \left|\frac{m_0 (y_{\varepsilon}^{-1}(z))}{\det \nabla y_\varepsilon(y_{\varepsilon}^{-1}(z))}\right| \leq \frac{1}{l_\varepsilon}.
\]
By direct computation, we have that 
\begin{align*}
    \nabla \left(\frac{m_{0}}{\det \nabla y_\varepsilon}\right) &
    = \frac{1}{\det \nabla y_\varepsilon} \nabla m_{0} + m_{0} \otimes  \nabla \left(\frac{1}{\det \nabla y_\varepsilon} \right) \\
    & = \frac{1}{\det \nabla y_\varepsilon} \nabla m_{0} - \frac{1}{(\det \nabla y_\varepsilon)^2} m_{0} \otimes \nabla (\det \nabla y_\varepsilon).
\end{align*}
This implies that 
\begin{align*}
    \bigg \| \nabla \left(\frac{m_{0}}{\det \nabla y_\varepsilon}\right)\bigg\|_{L^{2}} & \leq  
    \frac{\| \nabla m_{0}\|_{L^{2}}}{l_\varepsilon} + C \frac{\| \nabla \det \nabla y_{\varepsilon}\|_{L^{\infty}}}{l_{\varepsilon}}, 
\end{align*}
for some positive constant $C$ only depending on~$\Omega$. 
This inequality, together with the fact that 
$ \left|\nabla y^{-1}_\varepsilon(z)\right| \leq \frac{L_\varepsilon^{d-1}}{l_\varepsilon} $,
provides 
\begin{align}
\label{e:recovery}
    \int_{y_{\varepsilon}(\Omega)}|\nabla m_{\varepsilon}(z)|^2 \d z
    & = 
    \int_{y_{\varepsilon}(\Omega)}\left|\nabla \left(\frac{m_{0}(y^{-1}_\varepsilon(z))}{\det \nabla y_\varepsilon(y^{-1}_\varepsilon(z))}\right) \cdot \nabla y^{-1}_\varepsilon(z)\right|^2 \det{\nabla y_\varepsilon}(y^{-1}_\varepsilon(z)) \d z \\
    & \leq
    \int_{\Omega}\left|\nabla \left(\frac{m_{0}(x)}{\det \nabla y_\varepsilon(x)}\right)\right|^2  \left|\nabla y^{-1}_\varepsilon(y_\varepsilon(x))\right|^2 \det \nabla y_\varepsilon(x) \d x \nonumber\\
    &\leq 
    \frac{2 L_\varepsilon^{3d-2}}{l_\varepsilon^4} \|\nabla m_0 (x) \|^2_{L^{2}}  +  C \frac{\| \nabla \det \nabla y_{\varepsilon}\|^{2}_{L^{\infty}}}{l_{\varepsilon}^{2}}. \nonumber
\end{align}
Thus, $m_\varepsilon \in W^{1,2}(y_\varepsilon(\Omega); \bbR^{d})$.  

Let us define $\Omega_{\delta}:= \{ x \in \Omega: \, \dist (x, \partial\Omega) >\delta\}$. In particular, $\Omega_{\delta} \subset \Omega \cap y_{\varepsilon} (\Omega)$ for $\varepsilon>0$ small enough, due to the uniform convergence of $y_{\varepsilon}$ to the identity. For such $\varepsilon$ and for $z\in\Omega_\delta$ we estimate
\begin{align*}
    |m_\varepsilon(z) - m_0(z)| & \leq \left| \frac{m_0 (y_{\varepsilon}^{-1}(z))}{\det \nabla y_\varepsilon(y_{\varepsilon}^{-1}(z))} - \frac{m_0 (z)}{\det \nabla y_\varepsilon(y_{\varepsilon}^{-1}(z))}\right| + 
    \left| \frac{m_0 (z)}{\det \nabla y_\varepsilon(y_{\varepsilon}^{-1}(z))}- m_0(z) \right| \\
    & \leq 
    \frac{1}{\det \nabla y_\varepsilon(y_{\varepsilon}^{-1}(z))}\left| m_0 (y_{\varepsilon}^{-1}(z)) - m_0 (z)\right| + 
    |m_0 (z)|\cdot\left| \frac{1}{\det \nabla y_\varepsilon(y_{\varepsilon}^{-1}(z))}- 1 \right|.
\end{align*}
Therefore, taking the squares, integrating over $\Omega_{\delta}$, and letting $\varepsilon \to 0$ we infer that $m_{\varepsilon} \to m_{0}$ in $L^{2} (\Omega_{\delta}; \R^{d})$, for every $\delta>0$. 
Arguing as in the proof of Proposition~\ref{prop:compactness}, it is not difficult to see that 
$\chi_{y_\varepsilon(\Omega)} m_\varepsilon \to \chi_\Omega m_0$ strongly in $L^{2}(\bbR^d;\bbR^d)$
and $\chi_{y_\varepsilon(\Omega)} \nabla m_\varepsilon \to \chi_\Omega \nabla m_0$ weakly in $L^{2}(\bbR^d;  \bbR^{d \times d})$.
Moreover, by~\eqref{e:recovery} we can estimate the exchange energy as 
\begin{align*}
    \limsup_{\varepsilon \to 0} \int_{y_\varepsilon(\Omega)} |\nabla m_{\varepsilon}(y)|^2 \d z 
    & \leq 
    \limsup_{\varepsilon \to 0} \int_{\Omega} \frac{L_\varepsilon^{3d-2}}{l_\varepsilon^4} |\nabla m_0 (x) |^2 \d x + C \frac{\| \nabla \det \nabla y_{\varepsilon}\|^{2}_{L^{\infty}}}{l_{\varepsilon}^{2}} = \int_{\Omega} |\nabla m_0 (x) |^2 \d x.
\end{align*}

For a.e.\ $x\in\Omega$, the elastic energy has the following form by Taylor's expansion and by definition of $m_{\varepsilon}$
\begin{align*}
    \frac{1}{\varepsilon^{2}}  W(I + \varepsilon \nabla u_0, m_\varepsilon \circ y_{\varepsilon})
    & =
    \frac{1}{\varepsilon^{2}} \Phi (\exp(\varepsilon e (\nabla y_{\varepsilon}, m_\varepsilon \circ y_\varepsilon)) (I + \varepsilon \nabla u_0)) 
    \\
    &
    =
    \frac{1}{\varepsilon^{2}} \Phi (\exp (\varepsilon e (m_0)) (I + \varepsilon \nabla u_0))
    \\
    & =
    \frac{1}{\varepsilon^{2}} \Phi(I + \varepsilon (\nabla u_0 + e (m_0)) + o(\varepsilon)) 
    \\
    &
    = \frac{1}{{2}} \bbC(\strain (u_0) + e (m_0)): (\strain (u_0) + e (m_0)) + O(1)
\end{align*}
with $O(1) \to 0$ as $\varepsilon \to 0$ uniformly in $\Om$.
Then \hyperref[hyp:d]{(d)} and the Dominated Convergence Theorem give
\begin{align*}
    \limsup\limits_{\varepsilon \to 0}\frac{1}{\varepsilon^{2}} \int_\Omega W(I + \varepsilon \nabla u_0, m_\varepsilon \circ y_{\varepsilon}) \d x
    & = \frac{1}{{2}} \int_\Omega \bbC (\strain (u_0) + e (m_{0})) : (\strain (u_0) +  e (m_0))  \d x. 
\end{align*}

We conclude that
\begin{align*}
    \limsup_{\varepsilon \to 0} \mathcal{G}_\varepsilon(u_\varepsilon, m_\varepsilon) 
    & = \limsup_{\varepsilon \to 0} \frac{1}{\varepsilon^{2}} \int_{\Omega} W(I + \varepsilon \nabla u_0, m_0) \d x + \frac{1}{2}\int_{y_\varepsilon(\Omega)} | \nabla m_\varepsilon|^{2} \d z + \frac{\mu_0}{2} \int_{\bbR^d} |\nabla v_{m_\varepsilon}|^2 \d z \\
    & \leq
    \frac{1}{{2}} \int_{\Omega} \bbC (\strain (u_0)  + e(m_0)) : (\strain (u_0) + e (m_0))  \d x + \frac{1}{2}\int_{\Omega} | \nabla m_0|^{2} \d x + \frac{\mu_0}{2} \int_{\bbR^d} |\nabla v_{m_0}|^2 \d z \\
    & \vphantom{\int_{\Omega}}= \mathcal{G}_{0}(u_0, m_0),
\end{align*}
where, for the convergence of $\nabla v_{m_{\varepsilon}}$, we have used Proposition~\ref{prop:magn}.

For a general $u_0 \in W^{1,2}_w(\Omega;\bbR^d)$ we conclude by a standard diagonal argument.
\end{proof}

We conclude this section with the proof of convergence of minimizers of $\mathcal{F}_{\varepsilon}$ to minimizers of $\mathcal{F}_{0}$, namely,  Theorems~\ref{thm:linearization} and~\ref{thm:convergence_of_minimizers}. We start with the coercivity of~$\mathcal{F}_{\varepsilon}$.

\begin{proposition}\label{prop:f_to_g}
    Let $(u_{\varepsilon}, m_{\varepsilon})\in \mathcal{A}^{w}_{\varepsilon}$ and $K >0$ be such that  
    \[
        \sup_{\varepsilon>0} \mathcal{F}_\varepsilon (u_\varepsilon,m_\varepsilon) \leq K.
    \]
    Then, there exists $C(K) >0$ such that
    \[
        \sup_{\varepsilon>0} \mathcal{G}_\varepsilon (u_\varepsilon,m_\varepsilon) \leq C(K).
    \]
\end{proposition}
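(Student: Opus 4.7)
The plan is to exploit the identity
\[
    \mathcal{G}_\varepsilon(u_\varepsilon, m_\varepsilon) = \mathcal{F}_\varepsilon(u_\varepsilon, m_\varepsilon) + \mathscr{L}(u_\varepsilon) + \mathscr{M}(id+\varepsilon u_\varepsilon, m_\varepsilon)
\]
and to absorb the two loading terms into $\mathcal{G}_\varepsilon$ itself via Cauchy--Schwarz, Young's inequality, and the a priori estimates already available. The hypothesis $\mathcal{F}_\varepsilon(u_\varepsilon, m_\varepsilon) \leq K$ then becomes the starting point of a bootstrap.

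First I would control the mechanical loading. By Cauchy--Schwarz, $|\mathscr{L}(u_\varepsilon)| \leq \|f\|_{L^2}\|u_\varepsilon\|_{L^2}$. Since $u_\varepsilon = w$ on $\Gamma$ with $\mathcal{H}^{d-1}(\Gamma)>0$, Poincar\'e's inequality and Lemma~\ref{prop:compactness_u} give
\[
    \|u_\varepsilon\|_{L^2}^2 \leq C\bigl(\|\nabla u_\varepsilon\|_{L^2}^2 + \|w\|_{L^2(\Gamma)}^2\bigr) \leq C\bigl(\mathcal{G}_\varepsilon(u_\varepsilon, m_\varepsilon) + \|w\|_{L^2(\Gamma)}^2 + 1\bigr).
\]
A weighted Young inequality then yields $|\mathscr{L}(u_\varepsilon)| \leq \tfrac14 \mathcal{G}_\varepsilon(u_\varepsilon,m_\varepsilon) + C_1$ with $C_1$ depending only on $\|f\|_{L^2}$, $\|w\|_{W^{2,\infty}}$, $\Omega$, $d$, $p$.

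Next I would handle the Zeeman term. Write
\[
    |\mathscr{M}(id+\varepsilon u_\varepsilon, m_\varepsilon)| \leq \|h\|_{L^2(\bbR^d)} \|\chi_{y_\varepsilon(\Omega)} m_\varepsilon\|_{L^2(\bbR^d)}.
\]
To bound the $L^2$ norm of $m_\varepsilon$ on $y_\varepsilon(\Omega)$ I would use the Heisenberg constraint together with assumption \hyperref[hyp:e]{(e)}: by change of variables and $|m_\varepsilon\circ y_\varepsilon|\det\nabla y_\varepsilon = 1$,
\[
    \int_{y_\varepsilon(\Omega)} |m_\varepsilon|^2 \d z = \int_\Omega \frac{1}{\det\nabla y_\varepsilon} \d x,
\]
and since $\det F_{el} = \det\nabla y_\varepsilon$ (because $\operatorname{tr} E(M) = 0$), the same computation as in~\eqref{e:equiintegrability} combined with H\"older's inequality (using $a>1$) gives
\[
    \int_\Omega \frac{1}{\det\nabla y_\varepsilon} \d x \leq C\bigl(\varepsilon^2 \mathcal{G}_\varepsilon(u_\varepsilon, m_\varepsilon) + 1\bigr) \leq C\bigl(\mathcal{G}_\varepsilon(u_\varepsilon, m_\varepsilon) + 1\bigr).
\]
Another weighted Young inequality then produces $|\mathscr{M}(id+\varepsilon u_\varepsilon, m_\varepsilon)| \leq \tfrac14 \mathcal{G}_\varepsilon(u_\varepsilon,m_\varepsilon) + C_2$ with $C_2$ depending on $\|h\|_{L^2}$, $\Omega$, $d$, $p$, and the constant in~\hyperref[hyp:e]{(e)}.

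Putting the two estimates into the identity above yields
\[
    \mathcal{G}_\varepsilon(u_\varepsilon,m_\varepsilon) \leq K + \tfrac12 \mathcal{G}_\varepsilon(u_\varepsilon, m_\varepsilon) + C_1 + C_2,
\]
whence $\mathcal{G}_\varepsilon(u_\varepsilon,m_\varepsilon) \leq 2(K + C_1 + C_2) =: C(K)$, as desired. The only delicate point is the circular-looking nature of the argument: $\mathscr{L}$ and $\mathscr{M}$ are estimated through bounds that themselves involve $\mathcal{G}_\varepsilon$. This is resolved cleanly by the Young-inequality absorption, but it requires the coefficients in front of $\mathcal{G}_\varepsilon$ in the two estimates to be strictly less than $1$ when summed, which is why I split $\tfrac14 + \tfrac14 = \tfrac12$ above.
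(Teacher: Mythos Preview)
Your proof is correct and follows essentially the same strategy as the paper: bound $|\mathscr{L}(u_\varepsilon)|$ via Lemma~\ref{prop:compactness_u} and $|\mathscr{M}(y_\varepsilon,m_\varepsilon)|$ via the Heisenberg constraint together with assumption~\hyperref[hyp:e]{(e)}, each in terms of $\sqrt{\mathcal{G}_\varepsilon+C}$, and then close the loop. The paper leaves the last step as the quadratic inequality $\mathcal{G}_\varepsilon \leq K + C\sqrt{\mathcal{G}_\varepsilon} + C$, whereas you make the absorption explicit via weighted Young inequalities; these are equivalent, and your version (including the explicit use of H\"older with $a>1$ to pass from $\int (\det\nabla y_\varepsilon)^{-a}$ to $\int (\det\nabla y_\varepsilon)^{-1}$) is in fact slightly more detailed than the paper's.
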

\begin{proof}  
    Recall that 
    $\mathcal{F}_\varepsilon (u_\varepsilon,m_\varepsilon):=\mathcal{G}_\varepsilon (u_\varepsilon,m_\varepsilon) - \mathscr{L}(u_\varepsilon) - \mathscr{M}(y_\varepsilon,m_\varepsilon)$,
    with $|\mathscr{L}(u_\varepsilon)| \leq C_L \|u_\varepsilon\|_{L^2(\Omega;\bbR^d)}$
    and 
    $|\mathscr{M}(y_\varepsilon,m_\varepsilon)| \leq C_M\|m_\varepsilon\|_{L^2(y_\varepsilon(\Omega);\bbR^d)}$.
    Moreover, by Lemma~\ref{prop:compactness_u} we have
    \[
        \|u_\varepsilon\|_{L^2(\Omega;\bbR^d)}^2 \leq C \bigg(\mathcal{G}_\varepsilon (u_\varepsilon,m_\varepsilon) +  \int_{\Gamma} |w|^{2} \, \d \mathcal{H}^{d-1} + 1 \bigg),
    \] 
    while arguing as in~\eqref{e:mu} we deduce from the assumption \hyperref[hyp:e]{(e)} that
    \[
        \|m_\varepsilon\|_{L^2(y_\varepsilon(\Omega);\bbR^d)}^2 =
        \int_\Omega \frac{1}{\det \nabla y_\varepsilon} \d x
        \leq C(\mathcal{G}_\varepsilon (u_\varepsilon,m_\varepsilon) + 1).
    \]
    Summing up the estimates above, we obtain
    \[
        \mathcal{G}_\varepsilon (u_\varepsilon,m_\varepsilon)=\mathcal{F}_\varepsilon (u_\varepsilon,m_\varepsilon) + \mathscr{L}(u_\varepsilon) + \mathscr{M}(y_\varepsilon,m_\varepsilon)
        \leq K + C\sqrt{\mathcal{G}_\varepsilon (u_\varepsilon,m_\varepsilon)} + C,
    \]
    This concludes the proof of the proposition.
\end{proof}

We are now able to prove Theorem~\ref{thm:linearization}.

\begin{proof}[Proof of Theorem~\ref{thm:linearization}]
Consider a sequence 
$\varepsilon_k \to 0$ and recall that 
\[
    s_{\varepsilon}:=\inf\{\mathcal{F}_\varepsilon (u,m) : (u,m) \in \mathcal{A}^w_\varepsilon\}
    \qquad\text{ and }\qquad
    s_{0}:=\inf\{\mathcal{F}_0 (u,m) : (u,m) \in \mathcal{A}^w_0\}.    
\]
It is standard to show that  $\mathcal{F}_0$ has a minimizer $(u_0, m_0)\in W^{1,2}_w(\Omega;\bbR^d) \times W^{1,2}(\Omega;\bbS^{d-1})$ on $\mathcal{A}_0^w$.
It is also straightforward to check that 
$\inf \mathcal{F}_{\varepsilon_k} (u,m)$ is bounded with respect to $\varepsilon$.
Since the functionals $\mathscr{L}$ and $\mathscr{M}$ do not depend on $\varepsilon$,
from $\Gamma$-convergence of $\mathcal{G}_\varepsilon$
we deduce that  
$s_{\varepsilon_k} \to s_0$
and thus $\mathcal{F}_{\varepsilon_k} (u_{\varepsilon_k},m_{\varepsilon_k}) \to \mathcal{F}_0(u_0,m_0)$.
Propositions~\ref{prop:f_to_g} and \ref{prop:compactness} imply that 
$u_{\varepsilon_k}\weakto u_0$ weakly in $W^{1,2}_w(\Omega;\bbR^d)$,
$\chi_{y_{\varepsilon_k (\Omega)}} m_{\varepsilon_k} \to \chi_{\Omega} m_0$ strongly in $L^{2}(\bbR^d;\bbR^d)$,
and $\chi_{y_{\varepsilon_k (\Omega)}} \nabla m_{\varepsilon_k} \weakto \chi_{\Omega}\nabla m_0$ weakly in $L^{2}(\bbR^d;\bbR^d)$,
and $\mathcal{G}_{\varepsilon_k} (u_{\varepsilon_k},m_{\varepsilon_k}) \to \mathcal{G}(u_0,m_0)$.
\end{proof}

\begin{proof}[Proof of Theorem~\ref{thm:convergence_of_minimizers}]
The convergence of the sequence $u_{\varepsilon}$ can be improved to a strong convergence in $W^{1, 2} (\Om; \R^{d})$ arguing as in \cite[Subsection 7.2]{FriJamMul2006} (see also \cite[Subsection 3.2]{Bresciani-Kruzik}).
\end{proof}

\section*{Acknowledgments}
The work of S.A.\ was funded by the Austrian Science Fund (FWF) through the projects P 35359-N and ESP 65, by the Italian Ministry of Education and Research through the PRIN 2022 project ``Variational Analysis of Complex Systems in Material Science, Physics and Biology'' No.~2022HKBF5C, and by the University of Naples Federico II through the FRA project ``Regularity and Singularity in Analysis, PDEs, and Applied Sciences''. S.A.\ also acknowledges the hospitality of the University of Vienna and the TU Wien, where part of this work has been done.
M.K. was supported by the GA\v{C}R-FWF project 21-06569K, GA\v{C}R project 23-04766S, and by the Ministry of Education, Youth and Sport of the Czech Republic through the bilateral project  8J22AT017. 
A.M.\ was supported by the European Unions Horizon 2020 research and innovation programme under the Marie Sk\l{}adowska-Curie grant agreement No.~847693.
S.A.\ and A.M.\ acknowledge the kind hospitality of the ESI Institute (Vienna) during the workshop ``New perspective on Shape and Topology Optimization''.
  
Finally, the authors would like to express their gratitude to the anonymous reviewers for careful reading and their comments and suggestions that helped to improve the manuscript.


\end{document}